\newtheorem{theorem}{Theorem}[section]
\newtheorem{definition}{Definition}[section]
\theoremstyle{definition}
\newtheorem{remark}{Remark}[section]
\newtheorem{example}{Example}[section]
\newcommand{\be}{\begin{equation}}
\newcommand{\ee}{\end{equation}}
\newcommand{\bea}{\begin{eqnarray}}
\newcommand{\eea}{\end{eqnarray}}
\newcommand{\beb}{\begin{eqnarray*}}
\newcommand{\eeb}{\end{eqnarray*}}
\newcommand{\norm}[1]{\left\lVert#1\right\rVert}
\numberwithin{equation}{section}
\begin{document}
\title[$r-st_2^{\vartheta}$-convergence in PNS]{Rough statistical convergence of double  sequences in probabilistic normed spaces}

\author[ R. Mondal, N. Hossain]{$^1$Rahul Mondal, $^{2}$Nesar Hossain}

\address{$^{1}$Department of Mathematics, Vivekananda Satavarshiki Mahavidyalaya, Manikpara, Jhargram -721513, West Bengal, India.} 
\address{$^{2}$Department of Mathematics, The University of Burdwan,  Burdwan - 713104, West Bengal, India.}

\email{imondalrahul@gmail.com$^{1}$; nesarhossain24@gmail.com$^{2}$}

\subjclass[2020]{40A35, 40A05, 40A99, 26E50, 40G99}
\keywords{ Probabilistic normed space; rough statistical convergence of double sequences; rough statistical cluster points of double sequences.}

\begin{abstract}
In this paper, we have defined rough convergence and rough statistical convergence of double sequences in probabilistic normed spaces which is more generalized version than the rough statistical convergence of double sequences in normed linear spaces. Also, we have defined rough statistical cluster points of double sequences and then, investigated some important results associated with the set of rough statistical limits of double sequences in these spaces. Moreover, in the same spaces, we have proved an important relation between the set of all rough statistical cluster points and rough statistical limits under certain condition.
\end{abstract}

\maketitle

\section{Introduction}
In $1951$, the concept of usual convergence of real sequences was extended to statistical convergence of real sequences based on the natural density of a set by Fast \cite{Fast} and Steinhaus \cite{Steinhaus} independently. Later on, this idea has been studied in different directions and in different spaces by many authors as in \cite{Connor, Cakalli, Dundar, Fridy, Fridy1993, Mursaleen2000,   Mursaleen2003,  Nuray,  Salat, Sarabadan} and many more.

In $2001$, Phu \cite{Phu2001} has initially introduced the concept of rough convergence of sequences in finite dimensional normed linear spaces which is basically a generalization of usual convergence and, in the same paper he has investigated that $r$-limit set is bounded, closed, convex and many more interesting results and later on, this concept has been extended to infinite dimensional normed linear spaces \cite{Phu2003}. Also, He \cite{Phu2002} has defined the notion of rough continuity of linear operators. Later, Ayter \cite{Ayter} extended this notion to rough statistical convergence based on natural density of a set. Malik and Maity \cite{Malik2013, Malik2016} has defined rough convergence and rough statistical convergence of double sequences in normed linear spaces. After that, the research work on this concept is still being carried out in different directions as in \cite{Antal2021, Ghosal, Hossain, Kisi, Ozcan} and many references therein.

In $1942$, Menger \cite{Menger} first proposed the concept of statistical metric space, now called probabilistic metric space, which is an interesting and important generalization of the notion of metric space. This concept, later on, was studied by Schweizer and Sklar \cite{Schweizer}. Combining the idea of statistical metric space and normed linear space,  \v{S}erstnev \cite{Serstnev} introduced the idea of probabilistic normed space. In $1993$ Alsina et al. gave a new definition of probabilistic normed space whic is basically a special case of the definition of \v{Serstnev}. Recently,  Antal et al. \cite{Antal 2022} defined the notion of rough convergence and rough statistical convergence in probabilistic normed spaces. In this space, we have presented the notion of rough statistical convergence of double sequences and investigated some interesting results associated with the sets of rough statistical cluster points and rough statistical limits of double sequences.
\section{Preliminaries}
Throughout the paper $\mathbb{N}$ and $\mathbb{R}$ denote the set of positive integers and set of reals respectively. First we recall some basic definitions and notations.

\begin{definition}\cite{Schweizer}
  A triangular norm, briefly $t$-norm, is a binary operation on $[0,1]$ which is continuous, commutative, associative, non decreasing and has $1$ as unit element, i.e., it is the continuous mapping $\diamond: [0,1]\times [0,1]\rightarrow [0,1]$  such that for all $a,b,c,d\in[0,1]$:
  \begin{enumerate}
      \item $a\diamond 1=a$;
      \item $a\diamond b=b\diamond a$;
      \item $a\diamond b\geq c\diamond d$ whenever $a\geq c$ and $b\geq d$;
      \item $a\diamond (b\diamond c)=(a\diamond b)\diamond c $.
  \end{enumerate}
\end{definition}

\begin{example}\cite{Klement}
  The following are the examples of $t$-norms:
  \begin{enumerate}
      \item $x\diamond y= min\{x,y\}$;
      \item $x\diamond y=x.y$;
      \item $x\diamond y= max\{x+y-1,0\}$. This $t$-norm is known as Lukasiewicz $t$-norm.
  \end{enumerate}
\end{example}

\begin{definition}\cite{Frank}
  A function $f: \mathbb{R}\rightarrow \mathbb{R}_0^+$  is said to be a distribution function if it is non decreasing and left continuous with $\inf_{t\in\mathbb{R}} f(t)=0$ and $\sup_{t\in\mathbb{R}} f(t)=1$. We denote $D$ as the set of all distribution functions.
\end{definition}

\begin{definition}\cite{Frank}
    A triplet $(X,\vartheta,\diamond)$ is called a probabilistic normed space (shortly PNS) if $X$ is a real vector space, $\nu$ is a mapping from $X$ into $D$ (for $x\in X, t\in \mathbb(R)$, $\vartheta(x;t)$ is the value of the distribution function $\vartheta(x)$ at $t$) and $\diamond$ is a $t$-norm satisfying the following conditions:
    \begin{enumerate}
        \item $\vartheta(x;0)=0$;
        \item $\vartheta(x;t)=1$, $\forall \ t>0$ iff $x=\theta$, $\theta$ being the zero element of $X$;
        \item $\vartheta(\alpha x;t)=\vartheta(x;\frac{t}{|\alpha|})$, $\forall \ \alpha\in\mathbb{R}\setminus \{0\}$ and $\forall \ t>0$;
        \item $\vartheta(x+y;s+t)\geq \vartheta(x;t)\diamond\vartheta(y;s)$, $\forall \ x,y\in X$ and $\forall\ s,t\in \mathbb{R}_0^+$.
    \end{enumerate}
\end{definition}

\begin{example}\cite{Aghajani}
    For a real normed space $(X,\norm{\cdot})$, we define the probabilistic norm $\vartheta$ for $x\in X, t\in \mathbb{R}$ as $\vartheta(x;t)=\frac{t}{t+\norm{x}}$. Then $(X,\vartheta,\diamond)$ is a PNS under the $t$-norm $\diamond$ defined by $x\diamond y=\min \{x,y\}$. Also, $x_n\xrightarrow{\norm{\cdot}}\xi$ if and only if $x_n\xrightarrow{\vartheta}\xi$.
\end{example}

\begin{definition}\cite{Aghajani}
    Let $(X,\vartheta,\diamond)$ be a PNS. For $r>0$, the open ball $B(x,\lambda;r)$ with center $x\in X$ and radius $\lambda\in(0,1)$ is the set $$B(x,\lambda;r)=\{y\in X: \vartheta(y-x;r)>1-\lambda\}.$$ 
    Similarly, the  closed ball is the set $\overline{B(x,\lambda;r)}=\{y\in X: \vartheta(y-x;r)\geq 1-\lambda\}$
\end{definition}

\begin{definition}\cite{Karakus Demirci}
    Let $\{x_{mn}\}$ be a double sequence in a PNS $(X,\vartheta,\diamond)$. Then $\{x_{mn}\}$ is said to be convergent to $\xi\in X$ with respect to the probabilistic norm $\vartheta$ if for every $\varepsilon>0$ and $\lambda\in(0,1)$, there exists a positive integer $n_0$ such that $\vartheta(x_{mn}-\xi;\varepsilon)>1-\lambda$ whenever $m,n\geq n_0$. In this case we write $\vartheta_2\text{-}\lim x_{mn}=\xi$ or $x_{mn}\xrightarrow{\vartheta_2}\xi$.
\end{definition}

\begin{definition}
Let $K\subset \mathbb{N}$. Then the natural density $\delta(K)$ of $K$  is defined by $$\delta(K)=\lim_{n\to\infty}\frac{1}{n}|\{k\leq n: k\in K\}|,$$ provided the limit exists. 
\end{definition}
It is clear that if $K$ is finite then $\delta (K)=0$.

\begin{definition}\cite{Antal 2022}
    Let $\{x_n\}_{n\in\mathbb{N}}$ be a sequence in an PNS $(X,\vartheta,\diamond)$. Then   $\{x_n\}_{n\in\mathbb{N}}$ is said to be rough convergent to $\xi\in X$ with respect to the probabilistic norm $\vartheta$ if for every $\varepsilon>0$, $\lambda\in (0,1)$ and some non negative number $r$ there exists $n_0\in\mathbb{N}$ such that $\vartheta(x_n-\xi; r+\varepsilon)>1-\lambda$ for all $n>n_0$. In this case we write $r_\vartheta\text{-}\lim_{n\to\infty}x_n=\xi\ \text{or}\ x_n\xrightarrow{r_{\vartheta}}\xi$ and $\xi$ is called $r_\vartheta$-limit of $\{x_n\}_{n\in\mathbb{N}}$.
\end{definition}

\begin{definition}\cite{Antal 2022}
    Let $\{x_n\}_{n\in\mathbb{N}}$ be a sequence in an PNS $(X,\vartheta,\star)$.  Then $\{x_n\}_{n\in\mathbb{N}}$ is said to be rough statistically convergent to $\xi\in X$ with respect to the probabilistic norm $\vartheta$ if for every $\varepsilon>0$ and $\lambda\in(0,1)$ and some non negative number $r$, $\delta(\{n\in\mathbb{N}:\vartheta(x_n-\xi; r+\varepsilon)\leq 1-\lambda\})=0$. In this case we write $r\text{-}St_\vartheta\text{-}\lim_{n\to\infty}x_n=\xi\ \text{or}\ x_n\xrightarrow{r-St_\vartheta}\xi$.
\end{definition}

\begin{definition}\cite{Mursaleen2003}
    The double natural density of the set $K\subseteq \mathbb{N}\times\mathbb{N}$ is defined by $$\delta_2(K)=\lim_{m,n\to\infty}\frac{|\{(i,j)\in K: i\leq m\ \text{and}\ j\leq n \}|}{mn}$$ where $|\{(i,j)\in K: i\leq m\ \text{and}\ j\leq n \}|$ denotes the number of elements of $K$ not exceeding $m$ and $n$ respectively. It can be observed that if $K$ is finite, then $\delta_2(K)=0$. Also, if $A\subseteq B$, then $\delta_2(A)\leq \delta_2(B)$.
\end{definition}

\begin{definition}\cite{Karakus Demirci}
    Let $\{x_{mn}\}$ be a double sequence in a PNS $(X,\vartheta,\diamond)$. Then $\{x_{mn}\}$ is said to be statistically convergent to $\xi\in X$ with respect to the probabilistic norm $\vartheta$ if for every $\varepsilon>0$ and $\lambda\in(0,1)$, $K=\{ (m,n), m\leq i, n\leq j: \vartheta(x_{mn}-\xi;\varepsilon)\leq 1-\lambda\}$ has double natural density zero, that is, if $K(i,j)$ become the numbers of $(m,n)$ in $K$: $$\lim_{i,j}\frac{K(i,j)}{ij}=0.$$ In this case we write $st_2^\vartheta\text{-}\lim x_{mn}=\xi$ or $x_{mn}\xrightarrow{st_2^\vartheta}\xi$.
\end{definition}

\begin{definition}\cite{Malik2016}
    A  subsequence $x'=\{x_{j_p k_q}\}$ of a double sequence $\{x_{jk}\}$ is called a dense subsequence , if $\delta_2(\{(j_p k_q )\in\mathbb{N}\times\mathbb{N}: p,q\in\mathbb{N}\})=1$.
\end{definition}

\section{Main Results}

First we define rough convergence and rough statistical convergence of double sequences in probabilistic normed spaces.

\begin{definition}\label{defi3.1}
Let $\{x_{mn}\}$ be a double sequence in a PNS $(X,\vartheta,\diamond)$ and $r$ be a non negative real number. Then $\{x_{mn}\}$ is said to be rough convergent to $\beta\in X$ with respect to the probabilistic norm $\vartheta$ if for every $\varepsilon>0$, $\lambda\in(0,1)$ there exists $n_0\in\mathbb{N}$ such that $\vartheta(x_{mn}-\beta;r+\varepsilon)>1-\lambda$ for all $m,n\geq n_0$. In this case $\beta$ is called $r_2^\vartheta$-limit of $\{x_{mn}\}$ and we write $x_{mn}\xrightarrow{r_2^\vartheta}\beta$.
\end{definition}

\begin{definition}\label{defi3.2}
    Let $\{x_{mn}\}$ be a double sequence in a PNS $(X,\vartheta,\diamond)$ and $r$ be a non negative real number. Then $\{x_{mn}\}$ is said to be rough statistical  convergent to $\beta\in X$ with respect to the probabilistic norm $\vartheta$ if for every $\varepsilon>0$, $\lambda\in(0,1)$, $\delta_2(\{(m,n)\in\mathbb{N}\times\mathbb{N}: \vartheta(x_{mn}-\beta;r+\varepsilon) \leq 1-\lambda\})=0$. In this case $\beta$ is called $r\text{-}st_2^\vartheta$-limit of $\{x_{mn}\}$ and we write $r\text{-}st_2^\vartheta\text{-}\lim_{m,n\to\infty}x_{mn}=\beta$ or $x_{mn}\xrightarrow{r-st_2^\vartheta}\beta$.
\end{definition}

\begin{remark}
$(a)$ If we put $r=0$ in Definition \ref{defi3.1}, then the notion of rough convergence of a double sequence with respect to the probabilistic norm $\vartheta$ coincides with notion of ordinary convergence of the double sequence  with respect to the probabilistic norm $\vartheta$.\\
$(b)$ From Definition \ref{defi3.1}, it is clear that $r_2^\vartheta$-limit of a double sequence may not be unique. So, we denote $LIM_{x_{mn}}^{r_\vartheta}$ to mean the set of all $r_2^\vartheta$-limit of $\{x_{mn}\}$ with respect to the probabilistic norm $\vartheta$.\\
$(c)$ If we put $r=0$ in Definition \ref{defi3.2}, then the notion of rough statistical convergence of a double sequence with respect to the probabilistic norm $\vartheta$ coincides with statistical convergence of the double sequence  with respect to the probabilistic norm $\vartheta$. So, our whole discussion is on the fact $r>0$. \\
$(d)$ From Definition \ref{defi3.2}, it is clear that $r\text{-}st_2^\vartheta$-limit of a double sequence may not be unique. So, we denote $st_2^\vartheta\text{-}LIM_{x_{mn}}^r$ to mean the set of all $r\text{-}st_2^\vartheta$-limit of $\{x_{mn}\}$ with respect to the probabilistic norm $\vartheta$.
\end{remark}

The sequence $\{x_{mn}\}$ is said to be $r_2^\vartheta$-convergent if $LIM_{x_{mn}}^{r_\vartheta}\neq \emptyset$. But, if the sequence is unbounded with respect to the probabilistic norm $\vartheta$ then $LIM_{x_{mn}}^{r_\vartheta}= \emptyset$ although in this case  $st_2^\vartheta\text{-}LIM_{x_{mn}}^r\neq \emptyset$ may be happened which has been shown in the following example.

\begin{example}\label{exmp3.1}
Let $(X,\norm{\cdot})$ be a real normed linear space and let $\vartheta(x;t)=\frac{t}{t+\norm{x}}$ for $x\in X$ and $t>0$. Then $(X,\vartheta,\diamond)$ is a PNS under the $t$-norm $\diamond$ defined by $x\diamond y=\min\{x,y\}$. For all $m,n\in \mathbb{N}$, we define a sequence $\{x_{mn}\}$ by $x_{mn}=\begin{cases}
(-1)^{m+n}, \ m,n\neq i^2 \ (i\in\mathbb{N})\\
mn, \ \text{otherwise}
\end{cases}$. Then, we have $st_2^\vartheta\text{-}LIM_{x_{mn}}^r=\begin{cases}
    \emptyset , \ r<1 \\
    [1-r,r-1], \ \text{otherwise}
\end{cases}$ and $st_2^\vartheta\text{-}LIM_{x_{mn}}^r=\emptyset$ when $r=0$. Also, $LIM_{x_{mn}}^{r_\vartheta}=\emptyset$ for any $r\geq 0$.
\end{example}

\begin{remark}
    From Example \ref{exmp3.1}, we have $st_2^\vartheta\text{-}LIM_{x_{mn}}^r\neq \emptyset$ does not imply $LIM_{x_{mn}}^{r_\vartheta}\neq \emptyset$. But, $LIM_{x_{mn}}^{r_\vartheta}\neq \emptyset$ always implies $LIM_{x_{mn}}^{r_\vartheta}\neq \emptyset$ as $\delta_2(\{(m,n)\in \mathbb{N}\times\mathbb{N}: \ \text{either}\ m \ \text{or}\ n \ \text{runs over finite subsets of}\ \mathbb{N} \})=0$. So, $LIM_{x_{mn}}^{r_\vartheta}\subset st_2^\vartheta\text{-}LIM_{x_{mn}}^r$.
\end{remark}

\begin{example}\label{exmp3.2}
    We take the PNS in Example \ref{exmp3.1} and define the double sequence  $\{x_{mn}\}$ by  $x_{mn}=\begin{cases}
        mn, \ m,n=i^2\ (i\in\mathbb{N})\\
        0, \ \text{otherwise}
    \end{cases}$. Then, $st_2^\vartheta\text{-}LIM_{x_{mn}}^r=[-r,r]$. Now, if we consider a subsequence $\{x_{m_j n_k}\}$ of $\{x_{mn}\}$ such that $m_j=j^2,n_k=k^2$, $j,k\in\mathbb{N}$, then $st_2^\vartheta\text{-}LIM_{x_{m_j n_k}}^r=\emptyset$.
\end{example}

\begin{remark}
  From Example \ref{exmp3.2}, for any subsequence of a double sequence we not not conclude that $st_2^\vartheta\text{-}LIM_{x_{mn}}^r\subseteq st_2^\vartheta\text{-}LIM_{x_{m_j n_k}}^r$.
\end{remark}

But, this inclusion may be hold under certain condition which has been given in the following theorem.

\begin{theorem}
Let $\{x_{m_j n_k}\}$ be a dense subsequence of $\{x_{mn}\}$ in a PNS $(X,\vartheta,\diamond)$. Then $st_2^\vartheta\text{-}LIM_{x_{mn}}^r\subseteq st_2^\vartheta\text{-}LIM_{x_{m_j n_k}}^r$.
\end{theorem}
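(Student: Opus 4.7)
The plan is to let $\beta \in st_2^\vartheta\text{-}LIM_{x_{mn}}^r$ be arbitrary and show that $\beta \in st_2^\vartheta\text{-}LIM_{x_{m_j n_k}}^r$. Fix $\varepsilon > 0$ and $\lambda \in (0,1)$, and put $A = \{(m,n) \in \mathbb{N} \times \mathbb{N} : \vartheta(x_{mn} - \beta; r + \varepsilon) \leq 1 - \lambda\}$. By assumption $\delta_2(A) = 0$. What needs to be shown is that $F := \{(j,k) \in \mathbb{N} \times \mathbb{N} : \vartheta(x_{m_j n_k} - \beta; r + \varepsilon) \leq 1 - \lambda\}$ also has double natural density zero.

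The key observation is that the map $(j,k) \mapsto (m_j, n_k)$ is a strictly increasing bijection between $F$ and its image $E := \{(m_j, n_k) : (j,k) \in F\}$, and manifestly $E \subseteq A$, so $\delta_2(E) = 0$. Because the $m_j$'s and $n_k$'s are listed in increasing order, for every $i, l \in \mathbb{N}$ the bijection gives
\begin{equation*}
\bigl|F \cap ([1,i] \times [1,l])\bigr| = \bigl|E \cap ([1,m_i] \times [1,n_l])\bigr|.
\end{equation*}

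Next I would exploit the density-one hypothesis on the subsequence to control the stretch factor $m_i n_l / (il)$. Writing $M = \{m_j : j \in \mathbb{N}\}$ and $N = \{n_k : k \in \mathbb{N}\}$, the condition $\delta_2(M \times N) = 1$ forces $\delta(M) = \delta(N) = 1$, since $\delta_2(M \times N) = \delta(M)\,\delta(N)$. Using $|M \cap [1, m_j]| = j$, density one translates to $j/m_j \to 1$, whence $m_j/j \to 1$, and similarly $n_k/k \to 1$. Combining everything,
\begin{equation*}
\frac{\bigl|F \cap ([1,i] \times [1,l])\bigr|}{il} = \frac{\bigl|E \cap ([1, m_i] \times [1, n_l])\bigr|}{m_i n_l} \cdot \frac{m_i n_l}{il} \longrightarrow 0 \cdot 1 = 0
\end{equation*}
as $i, l \to \infty$, so $\delta_2(F) = 0$ and hence $\beta \in st_2^\vartheta\text{-}LIM_{x_{m_j n_k}}^r$.

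The only real content of the argument is this re-indexing step: density zero of the exceptional set is given in the original $(m,n)$-coordinates, whereas the conclusion demands density zero in the compressed $(j,k)$-coordinates, and the density-one hypothesis is precisely what ensures that the compression $m_i/i, n_l/l \to 1$ is asymptotically trivial. Everything else (producing $E$ from $F$, the monotone bijection, the subset relation $E \subseteq A$) is essentially bookkeeping.
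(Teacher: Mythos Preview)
Your argument is correct. The paper itself omits the proof entirely, writing only ``The proof is obvious. So, we omit details,'' so your write-up actually supplies the content the authors left out: the re-indexing identity $|F\cap([1,i]\times[1,l])|=|E\cap([1,m_i]\times[1,n_l])|$, together with the observation that $\delta_2(M\times N)=1$ forces $\delta(M)=\delta(N)=1$ (hence $m_i/i\to 1$, $n_l/l\to 1$), is exactly what is needed and is carried out cleanly.
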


\begin{proof}
The proof is obvious. So, we omit details.
\end{proof}

\begin{definition}
   Let $\{x_{mn}\}$ be a double sequence in a PNS $(X,\vartheta,\diamond)$. Then $\{x_{mn}\}$ is said to be statistically bounded with respect to the probabilistic norm $\vartheta$ if for every $\lambda\in(0,1)$ there exists a positive real number $G$ such that $\delta_2(\{(m,n)\in\mathbb{N}\times \mathbb{N}: \vartheta(x_{mn};G)\leq 1-\lambda  \})=0$.
\end{definition}

\begin{theorem}
   Let $\{x_{mn}\}$ be a double sequence in a PNS $(X,\vartheta,\diamond)$. Then $\{x_{mn}\}$ is statistically bounded if and only if  $st_2^\vartheta\text{-}LIM_{x_{mn}}^r\neq \emptyset$ for some $r>0$.
\end{theorem}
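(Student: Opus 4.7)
My plan is to establish the two implications separately. The backward direction (non-emptiness of $st_2^\vartheta\text{-}LIM_{x_{mn}}^r$ forces statistical boundedness) is the cleaner of the two and rests on axiom (4) of a PNS together with the fact that $\sup_t\vartheta(\beta;t)=1$ for any fixed $\beta$. The forward direction takes $\beta=\theta$ (the zero element) and aims to extract a workable $r$ directly from the statistical boundedness data itself.

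For the backward direction I would fix $\beta\in st_2^\vartheta\text{-}LIM_{x_{mn}}^r$ and pick an arbitrary $\lambda\in(0,1)$. By continuity of the $t$-norm $\diamond$ at $(1,1)$, there exists $\lambda_1\in(0,1)$ with $(1-\lambda_1)\diamond(1-\lambda_1)\geq 1-\lambda$. Applying Definition \ref{defi3.2} to this $\lambda_1$ with the choice $\varepsilon=1$ yields a double-density-zero set $A$ with $\vartheta(x_{mn}-\beta;\,r+1)>1-\lambda_1$ off $A$. Since $\vartheta(\beta;\cdot)$ is a distribution function with supremum $1$, choose $s>0$ with $\vartheta(\beta;s)>1-\lambda_1$. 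Writing $x_{mn}=(x_{mn}-\beta)+\beta$ and using axiom (4), for $(m,n)\notin A$ one obtains
\[
\vartheta(x_{mn};\,r+1+s)\geq\vartheta(x_{mn}-\beta;\,r+1)\diamond\vartheta(\beta;s)>(1-\lambda_1)\diamond(1-\lambda_1)\geq 1-\lambda,
\]
so that $G:=r+1+s$ witnesses statistical boundedness of $\{x_{mn}\}$ at level $\lambda$; since $\lambda$ was arbitrary, $\{x_{mn}\}$ is statistically bounded.

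For the forward direction I take $\beta=\theta$. Statistical boundedness provides, for each $\lambda\in(0,1)$, a constant $G_\lambda>0$ with $\delta_2(A_\lambda)=0$, where $A_\lambda=\{(m,n):\vartheta(x_{mn};G_\lambda)\leq 1-\lambda\}$. Monotonicity of $\vartheta$ in its second argument then yields $\{(m,n):\vartheta(x_{mn};r+\varepsilon)\leq 1-\lambda\}\subseteq A_\lambda$ whenever $r+\varepsilon\geq G_\lambda$, in which case the density-zero requirement of Definition \ref{defi3.2} is met for that pair $(\varepsilon,\lambda)$. A suitable $r$ is drawn from the $G_\lambda$'s, and verifying this inclusion for every admissible $(\varepsilon,\lambda)$ places $\theta$ in $st_2^\vartheta\text{-}LIM_{x_{mn}}^r$.

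The main obstacle, and the only delicate point in the whole argument, is the selection of $r$ in the forward direction: the definition of statistical boundedness only furnishes $G_\lambda$ pointwise in $\lambda$, whereas the rough parameter $r$ must be chosen once and for all and accommodate every pair $(\varepsilon,\lambda)$ simultaneously. Threading this quantifier interplay between the two definitions is where the care is needed. The backward direction, by contrast, avoids any such difficulty because $\sup_t\vartheta(\beta;t)=1$ lets all the $\lambda$-dependence be absorbed into a single $G=r+1+s$.
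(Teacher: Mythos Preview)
Your backward direction is correct and is a more careful version of exactly what the paper does: the paper simply says ``almost all $x_{mn}$ are contained in some ball with center $\xi$'', whereas you spell out the triangle-type estimate via axiom~(4) and the choice of $\lambda_1$ with $(1-\lambda_1)\diamond(1-\lambda_1)\ge 1-\lambda$. Nothing to change there.

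For the forward direction your approach coincides with the paper's: both of you take $\beta=\theta$ and try to manufacture $r$ from the statistical-boundedness data. The paper writes, for $(m,n)\in M^c$,
\[
\vartheta(x_{mn}-\theta;r+G)\ge \vartheta(x_{mn};G)\diamond\vartheta(\theta;r)>(1-\lambda)\diamond 1=1-\lambda,
\]
and then immediately concludes $\theta\in st_2^\vartheta\text{-}LIM_{x_{mn}}^r$, without ever checking the ``for every $\varepsilon>0$'' clause and without addressing that $G=G_\lambda$ varies with $\lambda$. You have correctly identified this as the delicate point. The difficulty is real: Definition~\ref{defi3.2} demands a single $r$ that works for \emph{all} $\lambda\in(0,1)$, while the boundedness hypothesis hands you only a $\lambda$-indexed family $\{G_\lambda\}$, and nothing in the definition prevents $G_\lambda\to\infty$ as $\lambda\to 0$ (already in the standard model $\vartheta(x;t)=t/(t+\|x\|)$ one has $G_\lambda$ of order $1/\lambda$).

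However, having named the obstacle you do not actually clear it: the sentence ``a suitable $r$ is drawn from the $G_\lambda$'s'' is precisely the step that needs an argument, and your monotonicity observation $\{(m,n):\vartheta(x_{mn};r+\varepsilon)\le 1-\lambda\}\subseteq A_\lambda$ only helps when $r+\varepsilon\ge G_\lambda$, which for small $\varepsilon$ forces $r\ge G_\lambda$ for every $\lambda$ --- exactly the uniform bound you do not have. So your proposal is incomplete at the very point you flag; the paper's own proof simply suppresses the issue rather than resolving it. In short: same route as the paper, sharper diagnosis than the paper, but the forward implication remains unproved in both.
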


\begin{proof}
    First suppose that $\{x_{mn}\}$ is statistically bounded. Then for every $\lambda\in(0,1)$ there exists a positive real number $G$ such that $\delta_2(\{(m,n)\in\mathbb{N}\times \mathbb{N}: \vartheta(x_{mn};G)\leq 1-\lambda  \})=0$. Now, let $M=\{(m,n)\in\mathbb{N}\times \mathbb{N}: \vartheta(x_{mn};G)\leq 1-\lambda  \}$ and $\theta$ be the zero element in $X$. Now for $m,n\in M^c$ we have $\vartheta(x_{mn}-\theta;r+G)\geq \vartheta(x_{mn};G)\diamond \vartheta(\theta;r)>(1-\lambda)\diamond 1=1-\lambda$. This gives $\theta\in st_2^\vartheta\text{-}LIM_{x_{mn}}^r$ and consequently $st_2^\vartheta\text{-}LIM_{x_{mn}}^r\neq \emptyset$.

    Conversely suppose that $st_2^\vartheta\text{-}LIM_{x_{mn}}^r\neq \emptyset$. Let $\xi\in st_2^\vartheta\text{-}LIM_{x_{mn}}^r\neq \emptyset$. Then for every $\varepsilon>0$ and $\lambda\in(0,1)$, $\delta_2(\{(m,n)\in\mathbb{N}\times\mathbb{N}: \vartheta(x_{mn}-\xi;r+\varepsilon)\leq 1-\lambda\})=0$. Therefore almost all $x_{mn}$ are contained in some ball with center $\xi$. This shows that $\{x_{mn}\}$  is statistically bounded. This completes the proof.
\end{proof}

Now we give the algebraic characterization of rough statistically convergent double sequences in probabilistic normed spaces.

\begin{theorem}
    Let $\{x_{mn}\}$ and $\{y_{mn}\}$ be  double sequences in a PNS $(X,\vartheta,\diamond)$. Then for some $r>0$ the following statements hold:
\begin{enumerate}
    \item If $x_{mn}\xrightarrow{r-st_2^\vartheta}\beta$ and $y_{mn}\xrightarrow{r-st_2^\vartheta}\eta$ then $x_{mn}+ y_{mn}\xrightarrow{r-st_2^\vartheta}\beta +\eta$
    \item If $x_{mn}\xrightarrow{r-st_2^\vartheta}\beta$ and $\alpha (\neq 0)\in\mathbb{R}$ then $\alpha x_{mn}\xrightarrow{r-st_2^\vartheta}\alpha \beta$.
\end{enumerate}
\end{theorem}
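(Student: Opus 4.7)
The strategy for both parts is to realise the failure set for the target sequence as a subset of a union (for (1)) or the exact preimage (for (2)) of sets with $\delta_2$-density zero, using the PNS axioms to push the $\vartheta$-inequalities through.

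For part (1), I would fix $\varepsilon > 0$ and $\lambda \in (0,1)$. Since the $t$-norm $\diamond$ is continuous with $1\diamond 1 = 1$, there exists $\mu \in (0,1)$ such that $(1-\mu)\diamond(1-\mu) > 1-\lambda$. The two hypotheses applied with $\mu$ in place of $\lambda$ then guarantee that
$A = \{(m,n) : \vartheta(x_{mn}-\beta;\, r+\varepsilon) \leq 1-\mu\}$ and $B = \{(m,n) : \vartheta(y_{mn}-\eta;\, r+\varepsilon) \leq 1-\mu\}$
satisfy $\delta_2(A) = \delta_2(B) = 0$. For every $(m,n) \notin A \cup B$, axiom (4) of a PNS applied with the symmetric split $s = t = r+\varepsilon$, together with the monotonicity of $\diamond$, yields
$\vartheta\bigl((x_{mn}+y_{mn})-(\beta+\eta);\, 2(r+\varepsilon)\bigr) \geq \vartheta(x_{mn}-\beta;\, r+\varepsilon) \diamond \vartheta(y_{mn}-\eta;\, r+\varepsilon) > 1-\lambda$,
so the set of $(m,n)$ at which $\{x_{mn}+y_{mn}\}$ fails the rough statistical condition is contained in the density-zero union $A \cup B$.

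For part (2), the proof is essentially a reparametrization via axiom (3) of the PNS: for every $\alpha \neq 0$ and every $m,n$,
$\vartheta(\alpha x_{mn} - \alpha\beta;\, r+\varepsilon) = \vartheta(x_{mn}-\beta;\, (r+\varepsilon)/|\alpha|)$.
The failure set for $\{\alpha x_{mn}\}$ at tolerance $(r+\varepsilon,\lambda)$ is therefore set-theoretically identical to the failure set for $\{x_{mn}\}$ at the rescaled tolerance $((r+\varepsilon)/|\alpha|,\lambda)$, which has $\delta_2$-density zero by the hypothesis on $\{x_{mn}\}$.

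The main obstacle is the bookkeeping of the roughness parameter rather than any deep analytic step: the symmetric split in axiom (4) naturally produces $2(r+\varepsilon)$ on the left-hand side, and axiom (3) naturally produces $|\alpha|(r+\varepsilon)$, so matching these with the advertised roughness degree $r$ requires a careful absorption of constants into the arbitrary tolerance $\varepsilon$ or a reinterpretation of how the roughness degree transforms under sums and scalar multiples. Apart from this, both parts are routine consequences of axioms (3) and (4) of the PNS together with the continuity of $\diamond$ at $(1,1)$.
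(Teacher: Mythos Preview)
Your approach is essentially the same as the paper's: both parts use continuity of $\diamond$ at $(1,1)$ to pick an auxiliary level $s$ (your $\mu$), then contain the failure set for the target sequence in a density-zero set coming from the hypotheses via PNS axioms (4) and (3) respectively.

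The one cosmetic difference is where the arithmetic of the threshold is placed. In part~(1) the paper splits in the \emph{hypothesis}: it sets
\[
A=\Bigl\{(m,n):\vartheta\bigl(x_{mn}-\beta;\tfrac{r+\varepsilon}{2}\bigr)\le 1-s\Bigr\},\qquad
B=\Bigl\{(m,n):\vartheta\bigl(y_{mn}-\eta;\tfrac{r+\varepsilon}{2}\bigr)\le 1-s\Bigr\},
\]
so that on $A^c\cap B^c$ axiom~(4) yields $\vartheta\bigl((x_{mn}+y_{mn})-(\beta+\eta);r+\varepsilon\bigr)>1-\lambda$, matching the definition literally. In part~(2) it likewise uses the threshold $(r+\varepsilon)/|\alpha|$ in the hypothesis. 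Your version instead keeps $r+\varepsilon$ in the hypothesis and ends up with $2(r+\varepsilon)$, respectively $|\alpha|(r+\varepsilon)$, in the conclusion.

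You are right to flag this bookkeeping as the only real obstacle, and in fact the paper does not resolve it either: asserting $\delta_2(A)=0$ with threshold $\tfrac{r+\varepsilon}{2}$ (or $\tfrac{r+\varepsilon}{|\alpha|}$ for $|\alpha|>1$) is precisely what the definition of $r$-rough statistical convergence gives only when that number exceeds $r$, i.e.\ when $\varepsilon>r$ (respectively $\varepsilon>(|\alpha|-1)r$). For smaller $\varepsilon$ the paper's argument has exactly the gap you identified; it simply leaves it unacknowledged. So your proof sketch is as complete as the paper's, and your closing remark about absorbing constants or reinterpreting the roughness degree is an honest statement of what both arguments are missing.
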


\begin{proof}
    Let $\{x_{mn}\}$ and $\{y_{mn}\}$ be  double sequences in  PNS $(X,\vartheta,\diamond)$ and $r>0$.
    \begin{enumerate}
        \item Let $x_{mn}\xrightarrow{r-st_2^\vartheta}\beta$ and $y_{mn}\xrightarrow{r-st_2^\vartheta}\eta$. Let $\varepsilon>0$. Now, for a given $\lambda\in(0,1)$, choose $s\in(0,1)$ such that $(1-s)\diamond(1-s)>1-\lambda$. So, $\delta_2(A)=0$ and $\delta_2(B)=0$ where $A=\{(m,n)\in\mathbb{N}\times\mathbb{N}: \vartheta(x_{mn}-\beta;\frac{r+\varepsilon}{2})\leq 1-s \}$ and $B=\{(m,n)\in\mathbb{N}\times\mathbb{N}: \vartheta(y_{mn}-\eta;\frac{r+\varepsilon}{2})\leq 1-s \}$. Now for $(i,j)\in A^c\cap B^c$, we have $\vartheta(x_{ij}+y_{ij}-(\beta+\eta);r+\varepsilon)\geq \vartheta(x_{ij}-\beta;\frac{r+\varepsilon}{2})\diamond(y_{ij}-\eta;\frac{r+\varepsilon}{2})>(1-s)\diamond (1-s)>1-\lambda$, i.e. $A^c\cap B^c\subset \{ (i,j)\in\mathbb{N}\times\mathbb{N}: \vartheta(x_{ij}+y_{ij}-(\beta+\eta);r+\varepsilon)> 1-\lambda \}$. Therefore $\delta_2(\{ (i,j)\in\mathbb{N}\times\mathbb{N}: \vartheta(x_{ij}+y_{ij}-(\beta+\eta);r+\varepsilon)\leq  1-\lambda \})=0$, which gives $x_{mn}+ y_{mn}\xrightarrow{r-st_2^\vartheta}\beta +\eta$.
        \item Since $x_{mn}\xrightarrow{r-st_2^\vartheta}\beta$ and $\alpha\neq 0$, then for every $\varepsilon>0$ and $\lambda\in(0,1)$, $\delta_2(\{(m,n)\in\mathbb{N}\times \mathbb{N}: \vartheta(x_{mn}-\beta;\frac{r+\varepsilon}{|\alpha|})\leq 1-\lambda\})=0$ i.e., $\delta_2(\{(m,n)\in\mathbb{N}\times \mathbb{N}: \vartheta(\alpha x_{mn}-\alpha \beta; r+\varepsilon)\leq 1-\lambda\})=0$, which gives $\alpha x_{mn}\xrightarrow{r-st_2^\vartheta}\alpha \beta$. This completes the proof.
    \end{enumerate}
\end{proof}

We will discuss on some topological and geometrical properties of the set $st_2^\vartheta\text{-}LIM_{x_{mn}}^r$.

\begin{theorem}
     Let $\{x_{mn}\}$ be a double sequence in a PNS $(X,\vartheta,\diamond)$. Then the set $st_2^\vartheta\text{-}LIM_{x_{mn}}^r$ is closed.
\end{theorem}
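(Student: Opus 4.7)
The plan is to show that $st_2^\vartheta\text{-}LIM_{x_{mn}}^r$ is closed in $(X,\vartheta,\diamond)$ by the sequential characterization: take an arbitrary sequence $\{y_k\}\subset st_2^\vartheta\text{-}LIM_{x_{mn}}^r$ that converges in the probabilistic norm to some $y\in X$, and show $y\in st_2^\vartheta\text{-}LIM_{x_{mn}}^r$. That is, for arbitrary $\varepsilon>0$ and $\lambda\in(0,1)$, I must verify
\[
\delta_2\bigl(\{(m,n)\in\mathbb{N}\times\mathbb{N}:\vartheta(x_{mn}-y;r+\varepsilon)\leq 1-\lambda\}\bigr)=0.
\]

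The core idea is to relate $\vartheta(x_{mn}-y;r+\varepsilon)$ to $\vartheta(x_{mn}-y_{k_0};r+\varepsilon/2)$ for a suitably chosen $y_{k_0}$ via the triangle-type axiom (4) of a PNS. First, using the continuity of the $t$-norm $\diamond$ together with $1\diamond 1=1$, I would pick $s\in(0,1)$ small enough that $(1-s)\diamond(1-s)>1-\lambda$. Next, since $y_k\xrightarrow{\vartheta}y$, there exists $k_0$ with $\vartheta(y_{k_0}-y;\varepsilon/2)>1-s$. Finally, because $y_{k_0}\in st_2^\vartheta\text{-}LIM_{x_{mn}}^r$, applying Definition \ref{defi3.2} with parameters $\varepsilon/2$ and $s$ yields
\[
\delta_2\bigl(E\bigr)=0,\qquad\text{where }E=\{(m,n):\vartheta(x_{mn}-y_{k_0};r+\varepsilon/2)\leq 1-s\}.
\]

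For every $(m,n)\in E^{c}$, the PNS triangle inequality gives
\[
\vartheta(x_{mn}-y;r+\varepsilon)\geq \vartheta(x_{mn}-y_{k_0};r+\varepsilon/2)\diamond \vartheta(y_{k_0}-y;\varepsilon/2)>(1-s)\diamond(1-s)>1-\lambda,
\]
so $\{(m,n):\vartheta(x_{mn}-y;r+\varepsilon)\leq 1-\lambda\}\subseteq E$. Monotonicity of $\delta_2$ then forces this set to have double natural density zero, proving $y\in st_2^\vartheta\text{-}LIM_{x_{mn}}^r$.

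The only delicate point I anticipate is the bookkeeping around the rough parameter $r$: one must split $r+\varepsilon=(r+\varepsilon/2)+\varepsilon/2$ (rather than $r/2+r/2+\varepsilon/2+\varepsilon/2$) so that the full budget $r+\varepsilon/2$ is carried by the rough-statistical hypothesis on $y_{k_0}$, while the remaining $\varepsilon/2$ absorbs the approximation of $y$ by $y_{k_0}$. The selection of $s$ via continuity of $\diamond$ and the use of axiom (4) of the PNS are otherwise routine, so no further machinery is needed.
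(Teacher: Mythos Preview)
Your proposal is correct and follows essentially the same argument as the paper: pick $s\in(0,1)$ with $(1-s)\diamond(1-s)>1-\lambda$, fix an approximant $y_{k_0}$ close to the limit at level $\varepsilon/2$, use its membership in $st_2^\vartheta\text{-}LIM_{x_{mn}}^r$ at level $r+\varepsilon/2$, and apply the triangle axiom to contain the bad set in one of density zero. The only cosmetic difference is that the paper tests closedness with a convergent \emph{double} sequence $\{y_{mn}\}$ in $st_2^\vartheta\text{-}LIM_{x_{mn}}^r$ rather than a single-index sequence, but since a PNS is first countable this changes nothing substantively.
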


\begin{proof}
    If $st_2^\vartheta\text{-}LIM_{x_{mn}}^r=\emptyset$ then we have nothing to prove. So, let $st_2^\vartheta\text{-}LIM_{x_{mn}}^r\neq \emptyset$. Suppose that $\{y_{mn}\}$ is a double sequence in $st_2^\vartheta\text{-}LIM_{x_{mn}}^r$ such that $y_{mn}\xrightarrow{\vartheta_2}\beta$. For a given $\lambda\in(0,1)$ choose $s\in(0,1)$ such that $(1-s)\diamond (1-s)>1-\lambda$. Then for every $\varepsilon>0$ there exists $n_0\in\mathbb{N}$, such that $\vartheta(y_{mn}-\beta; \frac{\varepsilon}{2})>1-s$ for all $m,n>n_0$. Suppose $i,j>n_0$. Then $\vartheta(y_{ij}-\beta; \frac{\varepsilon}{2})>1-s$. Again, since $\{y_{ij}\}\in st_2^\vartheta\text{-}LIM_{x_{mn}}^r$, then $\delta_2(P)=0$ where $P=\{(m,n)\in\mathbb{N}\times\mathbb{N}: \vartheta(x_{mn}-y_{ij}; r+\frac{\varepsilon}{2})\leq 1-s \}$. Now for $(s,t)\in P^c$, we have $\vartheta(x_{st}-\beta;r+\varepsilon)\geq \vartheta(x_{st}-y_{ij};r+\frac{\varepsilon}{2})\diamond \vartheta(y_{ij}-\beta;\frac{\varepsilon}{2})>(1-s)\diamond (1-s)>1-\lambda$. Therefore $\{ (s,t)\in\mathbb(N)\times\mathbb{N}: \vartheta(x_{st}-\beta;r+\varepsilon)\leq 1-\lambda\}\subset P$. Since $\delta_2(P)=0$, therefore $\delta_2(\{ (s,t)\in\mathbb(N)\times\mathbb{N}: \vartheta(x_{st}-\beta;r+\varepsilon)\leq 1-\lambda\})=0$. Consequently $\beta\in st_2^\vartheta\text{-}LIM_{x_{mn}}^r$. So, $st_2^\vartheta\text{-}LIM_{x_{mn}}^r$ is closed. This completes the proof.
\end{proof}

\begin{theorem}
    Let $\{x_{mn}\}$ be a double sequence in a PNS $(X,\vartheta,\diamond)$. Then the set $st_2^\vartheta\text{-}LIM_{x_{mn}}^r$ is convex for some $r>0$. 
\end{theorem}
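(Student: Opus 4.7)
The plan is to prove convexity directly: fix $y_1, y_2 \in st_2^\vartheta\text{-}LIM_{x_{mn}}^r$ and $\alpha \in (0,1)$ (the endpoints $\alpha=0,1$ being trivial), set $z = (1-\alpha)y_1 + \alpha y_2$, and show $z \in st_2^\vartheta\text{-}LIM_{x_{mn}}^r$. Given $\varepsilon>0$ and $\lambda \in (0,1)$, I would first use the continuity of $\diamond$ at $(1,1)$ to pick $s \in (0,1)$ with $(1-s)\diamond(1-s) > 1-\lambda$, exactly as in the preceding closedness proof.

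Next, using the hypothesis that $y_1,y_2$ are $r\text{-}st_2^\vartheta$-limits, I would define
\[
A = \{(m,n) : \vartheta(x_{mn}-y_1;\, r+\varepsilon) \leq 1-s\}, \qquad
B = \{(m,n) : \vartheta(x_{mn}-y_2;\, r+\varepsilon) \leq 1-s\},
\]
both of which have $\delta_2 = 0$, so $\delta_2(A \cup B) = 0$. The strategy is to show that on $(A \cup B)^c = A^c \cap B^c$ the inequality $\vartheta(x_{mn}-z;\, r+\varepsilon) > 1-\lambda$ holds, giving an inclusion that forces the exceptional set for $z$ to have density zero.

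The key algebraic step is the decomposition
\[
x_{mn} - z = (1-\alpha)(x_{mn}-y_1) + \alpha(x_{mn}-y_2),
\]
together with the observation $(1-\alpha)(r+\varepsilon) + \alpha(r+\varepsilon) = r+\varepsilon$. Applying axiom (4) of the PNS definition to this splitting, then axiom (3) (the scaling property $\vartheta(\mu x;t) = \vartheta(x; t/|\mu|)$) to each of the two terms to absorb the scalars $1-\alpha$ and $\alpha$, yields
\[
\vartheta(x_{mn}-z;\, r+\varepsilon) \;\geq\; \vartheta(x_{mn}-y_1;\, r+\varepsilon) \,\diamond\, \vartheta(x_{mn}-y_2;\, r+\varepsilon).
\]
For $(m,n) \in A^c \cap B^c$ both factors exceed $1-s$, and monotonicity of $\diamond$ gives the bound $(1-s)\diamond(1-s) > 1-\lambda$.

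The main (minor) obstacle is the book-keeping with axiom (3), since it requires $\alpha \neq 0$ and $1-\alpha \neq 0$; this is why the endpoint cases are handled separately. Once that decomposition is set up, the density conclusion $\delta_2(\{(m,n) : \vartheta(x_{mn}-z;\,r+\varepsilon) \leq 1-\lambda\}) = 0$ follows from $\delta_2(A \cup B) = 0$, which means $z \in st_2^\vartheta\text{-}LIM_{x_{mn}}^r$ and proves convexity.
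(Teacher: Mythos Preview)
Your proof is correct and follows the same overall strategy as the paper: write $x_{mn}-z=(1-\alpha)(x_{mn}-y_1)+\alpha(x_{mn}-y_2)$, apply PNS axiom~(4), then the scaling axiom~(3), and finish with a density-zero union argument. The one genuine difference is the splitting of $r+\varepsilon$ in axiom~(4). You split it proportionally as $(1-\alpha)(r+\varepsilon)+\alpha(r+\varepsilon)$, so that after scaling each factor is exactly $\vartheta(x_{mn}-y_i;\,r+\varepsilon)$ and the definition of an $r\text{-}st_2^\vartheta$-limit applies verbatim. The paper instead splits $r+\varepsilon$ into two equal halves, which after scaling produces the radii $\frac{r+\varepsilon}{2(1-t)}$ and $\frac{r+\varepsilon}{2t}$; invoking the definition then tacitly requires both of these to exceed $r$, and for $t\neq\frac12$ one of the two inequalities fails once $\varepsilon$ is small. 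Thus your proportional splitting is not merely a cosmetic variant: it sidesteps a small gap in the paper's argument while keeping the proof otherwise identical.
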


\begin{proof}
    Let $\xi_1,\xi_2\in st_2^\vartheta\text{-}LIM_{x_{mn}}^r$ and $t\in (0,1)$. Suppose $\lambda\in(0,1)$. Choose $s\in(0,1)$ such that $(1-s)\diamond (1-s)>1-\lambda$. Then $\delta_2(A)=0$ and $\delta_2(B)=0$ where $A=\{ (m,n)\in\mathbb{N}\times\mathbb{N}: \vartheta(x_{mn}-\xi_1;\frac{r+\varepsilon}{2(1-t)})\leq 1-s \}$ and $B= \{ (m,n)\in\mathbb{N}\times\mathbb{N}: \vartheta(x_{mn}-\xi_2;\frac{r+\varepsilon}{2t})\leq 1-s \}$. Now for $i,j\in A^c\cap B^c$ we have $\vartheta(x_{ij}-[(1-t)\xi_1+t\xi_2];r+\varepsilon)\geq \vartheta((1-t)(x_{ij}-\xi_1);\frac{r+\varepsilon}{2})\diamond \vartheta(t(x_{ij}-\xi_2);\frac{r+\varepsilon}{2})=\vartheta(x_{ij}-\xi_1;\frac{r+\varepsilon}{2(1-t)})\diamond \vartheta(x_{ij}-\xi_2;\frac{r+\varepsilon}{2t})>(1-s)\diamond(1-s)>1-\lambda$. Therefore $\{(i,j)\in \mathbb{N}\times{N}: \vartheta(x_{ij}-[(1-t)\xi_1+t\xi_2];r+\varepsilon)\leq 1-\lambda\}\subset A\cup B$. Hence $\delta_2(\{(i,j)\in \mathbb{N}\times \mathbb{N}: \vartheta(x_{ij}-[(1-t)\xi_1+t\xi_2];r+\varepsilon)\leq 1-\lambda\})=0$, i.e. $(1-t)\xi_1+t\xi_2\in st_2^\vartheta\text{-}LIM_{x_{mn}}^r$. Therefore $st_2^\vartheta\text{-}LIM_{x_{mn}}^r$ is convex. This completes the proof.
\end{proof}

\begin{theorem}
    A double sequence $\{x_{mn}\}$ in a PNS $(X,\vartheta,\diamond)$ is rough statistically convergent to $\xi\in X$ with respect to the probabilistic norm $\vartheta$ for some $r>0$ if there exists  a double sequence $\{y_{mn}\}$ in $X$ such that $st_2^\vartheta\text{-}\lim y_{mn}=\xi$ and for every $\lambda\in(0,1)$, $\vartheta(x_{mn}-y_{mn};r)>1-\lambda$ for all $m,n\in\mathbb{N}$.
\end{theorem}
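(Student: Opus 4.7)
The plan is to deduce the rough statistical convergence of $\{x_{mn}\}$ from the triangle-like inequality (axiom (4)) of the probabilistic norm, which gives, for every $m,n\in\mathbb{N}$ and every $\varepsilon>0$,
\[
\vartheta(x_{mn}-\xi;\,r+\varepsilon)\ \geq\ \vartheta(x_{mn}-y_{mn};\,r)\ \diamond\ \vartheta(y_{mn}-\xi;\,\varepsilon).
\]
The idea is to make the right-hand side exceed $1-\lambda$ on a set of double density $1$; the first factor is uniformly close to $1$ by hypothesis, while the second factor is close to $1$ on a set of density $1$ by the statistical convergence of $\{y_{mn}\}$.

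Concretely, given $\lambda\in(0,1)$ and $\varepsilon>0$, I would first use the continuity of the $t$-norm $\diamond$ at $(1,1)$ (together with $1\diamond 1=1$) to select $s\in(0,1)$ with $(1-s)\diamond(1-s)>1-\lambda$. Next I would invoke the hypothesis applied to $s$ in place of $\lambda$ to obtain $\vartheta(x_{mn}-y_{mn};\,r)>1-s$ for \emph{all} $m,n\in\mathbb{N}$. In parallel, from $st_2^\vartheta\text{-}\lim y_{mn}=\xi$ the set
\[
A\ :=\ \{(m,n)\in\mathbb{N}\times\mathbb{N}:\ \vartheta(y_{mn}-\xi;\,\varepsilon)\leq 1-s\}
\]
has $\delta_2(A)=0$.

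Then for $(m,n)\in A^{c}$ the displayed inequality above yields
\[
\vartheta(x_{mn}-\xi;\,r+\varepsilon)\ \geq\ \vartheta(x_{mn}-y_{mn};\,r)\diamond\vartheta(y_{mn}-\xi;\,\varepsilon)\ >\ (1-s)\diamond(1-s)\ >\ 1-\lambda,
\]
so that
\[
\bigl\{(m,n)\in\mathbb{N}\times\mathbb{N}:\ \vartheta(x_{mn}-\xi;\,r+\varepsilon)\leq 1-\lambda\bigr\}\ \subseteq\ A.
\]
Since the superset has double density $0$ and a subset of a null-density set has null density, this proves $x_{mn}\xrightarrow{r-st_2^\vartheta}\xi$, as required.

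The only non-routine step is the initial choice of $s$; it is the standard device for splitting a single ``$1-\lambda$'' threshold into two ``$1-s$'' thresholds that the $t$-norm can combine, and relies essentially on the continuity axiom of $\diamond$. Everything else is a direct application of the hypotheses and monotonicity of $\delta_2$.
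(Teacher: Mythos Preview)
Your proof is correct and follows essentially the same route as the paper: choose $s\in(0,1)$ with $(1-s)\diamond(1-s)>1-\lambda$, use the statistical convergence of $\{y_{mn}\}$ to get a density-zero exceptional set $A$, and combine the two factors via the triangle inequality for $\vartheta$ on $A^c$. Your write-up is in fact slightly cleaner, since you explicitly invoke the hypothesis with $s$ in place of $\lambda$ to obtain $\vartheta(x_{mn}-y_{mn};r)>1-s$, a step the paper leaves implicit.
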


\begin{proof}
    Let $\varepsilon>0$ be given. For a given $\lambda\in(0,1)$, choose $s\in(0,1)$ such that $(1-s)\diamond(1-s)>1-\lambda$. Suppose that $st_2^\vartheta\text{-}\lim y_{mn}=\xi$ and $\vartheta(x_{mn}-y_{mn};r)>1-\lambda$ for all $m,n\in\mathbb{N}$. Then $\delta_2(A)=0$ where $A=\{(m,n)\in\mathbb{N}\times\mathbb{N}: \vartheta(y_{mn}-\xi;\varepsilon)\leq 1-s\}$. Now for $(i,j)\in A^c$, we have $\vartheta(x_{ij}-\xi;r+\varepsilon)\geq \vartheta(x_{ij}-y_{ij};r)\diamond \vartheta(y_{ij}-\xi;r)>(1-s)\diamond(1-s)>1-\lambda$. Therefore $\{(i,j)\in\mathbb{N}\times\mathbb{N}: \vartheta(x_{ij}-\xi;r+\varepsilon)\leq 1-\lambda\}\subset A$. Hence $\delta_2(\{(i,j)\in\mathbb{N}\times\mathbb{N}: \vartheta(x_{ij}-\xi;r+\varepsilon)\leq 1-\lambda\})=0$. Consequently $x_{mn}\xrightarrow{r-st_2^\vartheta}\xi$. This completes the proof.
\end{proof}

\begin{theorem}
   Let  $\{x_{mn}\}$ be a double sequence in a PNS $(X,\vartheta,\diamond)$. Then there do not exist $x_1,x_2\in st_2^\vartheta\text{-}LIM_{x_{mn}}^r$ for some $r>0$ and every $\lambda\in (0,1)$ such that $\vartheta(x_1-x_2;mr)\leq 1-\lambda$ for $m(\in\mathbb{R})>2$.
\end{theorem}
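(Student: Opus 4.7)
The plan is to argue by contradiction: suppose $x_1,x_2\in st_2^\vartheta\text{-}LIM_{x_{mn}}^r$ and that there exist $m>2$ and $\lambda\in(0,1)$ for which $\vartheta(x_1-x_2;mr)\leq 1-\lambda$. The idea is that rough statistical convergence forces almost every term $x_{ij}$ to lie probabilistically close to both $x_1$ and $x_2$, and the PNS triangle inequality then forces $x_1$ and $x_2$ themselves to be probabilistically close at scale $2r$, hence at any larger scale $mr$.

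First I would pick an auxiliary parameter $s\in(0,1)$ with $(1-s)\diamond(1-s)>1-\lambda$; this is possible by continuity of $\diamond$ together with $1\diamond 1=1$, and is the standard trick used earlier in the paper (see the closedness and convexity theorems). Next, since $m>2$ and $r>0$, I would choose $\varepsilon>0$ small enough that $2(r+\varepsilon)\leq mr$, e.g.\ $\varepsilon\leq(m-2)r/2$.

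Then, invoking Definition~\ref{defi3.2} for $x_1$ and for $x_2$ with this $\varepsilon$ and $s$, the sets
\[
A=\{(i,j)\in\mathbb{N}\times\mathbb{N}:\vartheta(x_{ij}-x_1;r+\varepsilon)\leq 1-s\},\ B=\{(i,j)\in\mathbb{N}\times\mathbb{N}:\vartheta(x_{ij}-x_2;r+\varepsilon)\leq 1-s\}
\]
both have double natural density $0$, so $\delta_2(A\cup B)=0$ and in particular $A^c\cap B^c\neq\emptyset$. Fix any $(i,j)\in A^c\cap B^c$. Using property (3) of a PNS (with $\alpha=-1$, so $\vartheta(-z;t)=\vartheta(z;t)$), property (4) at the split $mr\geq 2(r+\varepsilon)=(r+\varepsilon)+(r+\varepsilon)$, and monotonicity of $\vartheta$ in $t$, I get
\[
\vartheta(x_1-x_2;mr)\geq \vartheta(x_1-x_{ij};r+\varepsilon)\diamond\vartheta(x_{ij}-x_2;r+\varepsilon)>(1-s)\diamond(1-s)>1-\lambda,
\]
contradicting $\vartheta(x_1-x_2;mr)\leq 1-\lambda$ and completing the proof.

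The main (and really only) subtle point is choosing $\varepsilon$ so that $2(r+\varepsilon)\leq mr$; this is exactly where the hypothesis $m>2$ is needed, and is analogous to the classical ``diameter $\leq 2r$'' bound on the rough limit set in normed spaces. Everything else is routine manipulation of the $t$-norm inequality and the density-zero exceptional sets, following the same pattern used in the earlier closedness and convexity arguments.
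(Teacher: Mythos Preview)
Your proof is correct and follows essentially the same route as the paper's own argument: contradiction, choice of an auxiliary $s$ with $(1-s)\diamond(1-s)>1-\lambda$, two density-zero exceptional sets coming from $x_1,x_2\in st_2^\vartheta\text{-}LIM_{x_{mn}}^r$, and then the PNS triangle inequality applied at a single index in the complement to force $\vartheta(x_1-x_2;mr)>1-\lambda$. The only cosmetic difference is that the paper first establishes $\vartheta(x_1-x_2;2r+\varepsilon)>1-\lambda$ for arbitrary $\varepsilon>0$ and then specializes $\varepsilon=(m-2)r$, whereas you fix $\varepsilon\leq(m-2)r/2$ at the outset and invoke monotonicity of $\vartheta(\,\cdot\,;t)$ in $t$; both are equivalent bookkeeping.
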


\begin{proof}
    On contrary, we assume that there exist $x_1,x_2\in st_2^\vartheta\text{-}LIM_{x_{mn}}^r$  for which $\vartheta(x_1-x_2;mr)\leq 1-\lambda$ for $m(\in\mathbb{R})>2$. Let $\varepsilon>0$ be given. For a given $\lambda\in(0,1)$, choose $s\in(0,1)$ such that $(1-s)\diamond (1-s)>1-\lambda$. Then $P=\{(m,n)\in\mathbb{N}\times\mathbb{N}: \vartheta(x_{mn}-x_1;r+\frac{\varepsilon}{2})\leq 1-s\}$ and $Q=\{(m,n)\in\mathbb{N}\times\mathbb{N}: \vartheta(x_{mn}-x_2;r+\frac{\varepsilon}{2})\leq 1-s\}$ have double natural density zero. Now for $(i,j)\in P^c\cap Q^c$, we have $\vartheta(x_1-x_2;2r+\varepsilon)\geq \vartheta(x_{ij}-x_1;r+\frac{\varepsilon}{2})\diamond \vartheta(x_{ij}-x_2;r+\frac{\varepsilon}{2})>(1-s)\diamond (1-s)>1-\lambda$. Therefore, \begin{equation}\label{eqn3.1}
        \vartheta(x_1-x_2;2r+\varepsilon)>1-\lambda
    \end{equation}
    Now, if we choose $\varepsilon=mr-2r, m>2$, in Equation \ref{eqn3.1} then we have $\vartheta(x_1-x_2;mr)>1-\lambda, m>2$, which is a contradiction. This completes the proof.
\end{proof}

\begin{definition}(c.f. \cite{Karakus})
    Let  $\{x_{mn}\}$ be a double sequence in a PNS $(X,\vartheta,\diamond)$. Then a point $\xi\in X$ is said to be  statistical cluster point of $\{x_{mn}\}$ with respect to the probabilistic norm $\vartheta$ if for every $\varepsilon>0$ and  $\lambda\in(0,1)$, $\delta_2(\{(m,n)\in\mathbb{N}\times \mathbb{N}: \vartheta (x_{mn}-\xi;\varepsilon)>1-\lambda\})>0$.
\end{definition}
We denote $\Lambda_{(x_{mn})}(st_2^\vartheta)$ to mean ordinary statistical cluster points of $\{x_{mn}\}$ with respect to the probabilistic norm $\vartheta$.

\begin{definition}
    Let  $\{x_{mn}\}$ be a double sequence in a PNS $(X,\vartheta,\diamond)$. Then a point $\xi\in X$ is said to be rough statistical cluster point of $\{x_{mn}\}$ with respect to the probabilistic norm $\vartheta$ if for every $\varepsilon>0$, $\lambda\in(0,1)$ and some $r>0$, $\delta_2(\{(m,n)\in\mathbb{N}\times \mathbb{N}: \vartheta (x_{mn}-\xi;r+\varepsilon)>1-\lambda\})>0$. The set of all rough statistical cluster points of $\{x_{mn}\}$ is denoted as $\Lambda_{(x_{mn})}^r(st_2^\vartheta)$.
\end{definition}

 \begin{remark}
     If $r=0$, then $\Lambda_{(x_{mn})}^r(st_2^\vartheta)= \Lambda_{(x_{mn})}(st_2^\vartheta)$.
 \end{remark}

\begin{theorem}
   Let  $\{x_{mn}\}$ be a double sequence in a PNS $(X,\vartheta,\diamond)$. Then, the set  $\Lambda_{(x_{mn})}^r(st_2^\vartheta)$ is closed for some $r>0$.
\end{theorem}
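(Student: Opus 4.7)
The plan is to show closure sequentially: take any double sequence in $\Lambda_{(x_{mn})}^r(st_2^\vartheta)$ that converges (with respect to $\vartheta_2$) to some $\xi\in X$, and prove that $\xi$ itself is a rough statistical cluster point. So suppose $\{\xi_{pq}\}$ is a double sequence in $\Lambda_{(x_{mn})}^r(st_2^\vartheta)$ with $\xi_{pq}\xrightarrow{\vartheta_2}\xi$. Fix $\varepsilon>0$ and $\lambda\in(0,1)$; the goal is to show
$$\delta_2\bigl(\{(m,n)\in\mathbb{N}\times\mathbb{N}:\vartheta(x_{mn}-\xi;r+\varepsilon)>1-\lambda\}\bigr)>0.$$

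Here the main trick is the standard $t$-norm splitting: by the continuity of $\diamond$ at $(1,1)$ (since $1\diamond 1=1$), choose $s\in(0,1)$ such that $(1-s)\diamond(1-s)>1-\lambda$. Next, use $\vartheta_2$-convergence of $\xi_{pq}$ to $\xi$ to pick a pair $(p_0,q_0)$ large enough that $\vartheta(\xi_{p_0q_0}-\xi;\varepsilon/2)>1-s$. Since $\xi_{p_0q_0}\in \Lambda_{(x_{mn})}^r(st_2^\vartheta)$, the set
$$T=\bigl\{(m,n)\in\mathbb{N}\times\mathbb{N}:\vartheta(x_{mn}-\xi_{p_0q_0};r+\tfrac{\varepsilon}{2})>1-s\bigr\}$$
has $\delta_2(T)>0$ by the very definition of rough statistical cluster point.

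Now for each $(m,n)\in T$, the triangle inequality (condition (4) of a PNS) gives
$$\vartheta(x_{mn}-\xi;r+\varepsilon)\;\geq\; \vartheta(x_{mn}-\xi_{p_0q_0};r+\tfrac{\varepsilon}{2})\diamond\vartheta(\xi_{p_0q_0}-\xi;\tfrac{\varepsilon}{2})\;>\;(1-s)\diamond(1-s)\;>\;1-\lambda,$$
so $T\subseteq\{(m,n):\vartheta(x_{mn}-\xi;r+\varepsilon)>1-\lambda\}$. By monotonicity of $\delta_2$ the latter set has positive double natural density, which proves $\xi\in \Lambda_{(x_{mn})}^r(st_2^\vartheta)$ and hence closure.

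The only subtle point, and the one I would be careful with, is the \emph{direction} of the density inequality: unlike closure proofs for $st_2^\vartheta$-$LIM$ sets (which push a zero-density set inside another zero-density set), here we need to push a positive-density set inside our target set, so the set inclusion must go the right way. The $t$-norm splitting with the same $s$ on both factors, together with the halving of $\varepsilon$, is precisely what arranges the inclusion $T\subseteq\{\cdots>1-\lambda\}$ in the correct direction; this is where I would spend most of the care. Everything else is bookkeeping with $\varepsilon/2$ and the PNS triangle inequality.
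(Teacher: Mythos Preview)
Your proof is correct and follows essentially the same approach as the paper's: sequential closure via the $t$-norm splitting $(1-s)\diamond(1-s)>1-\lambda$, picking a fixed term of the approximating sequence within $\varepsilon/2$ of the limit, and using the PNS triangle inequality to nest the positive-density set inside the target set. In fact, your write-up handles the inclusion $T\subseteq\{(m,n):\vartheta(x_{mn}-\xi;r+\varepsilon)>1-\lambda\}$ in the correct direction, whereas the paper's printed proof states it backwards (a typo) before drawing the right conclusion.
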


\begin{proof}
    If $\Lambda_{(x_{mn})}^r(st_2^\vartheta)=\emptyset$ then we have nothing to prove. So, let $\Lambda_{(x_{mn})}^r(st_2^\vartheta)\neq \emptyset$. Suppose that $\{\omega_{mn}\}$ is a double sequence in $\Lambda_{(x_{mn})}^r(st_2^\vartheta)$ such that $\omega_{mn}\xrightarrow{\vartheta_2}\zeta$. Now for given $\lambda\in(0,1)$, choose $s\in (0,1)$ such that $(1-s)\diamond (1-s)>1-\lambda$. Then for every $\varepsilon>0$ there exists $k_\varepsilon\in\mathbb{N}$ such that $\vartheta(\omega_{mn}-\zeta;\frac{\varepsilon}{2})>1-s$ for all $m,n\geq k_\varepsilon$. Now choose $m_0,n_0\in\mathbb{N}$ such that $m_0,n_0>k_\varepsilon$. Then $\vartheta(\omega_{m_0n_0}-\zeta;\frac{\varepsilon}{2})>1-s$. Again, since $\{\omega_{m_0n_0}\}\in \Lambda_{(x_{mn})}^r(st_2^\vartheta)$, then $\delta_2(K)>0$ where $K=\{ (m,n)\in\mathbb{N}\times\mathbb{N}: \vartheta(x_{mn}-\omega_{m_0n_0};r+\frac{\varepsilon}{2})>1-s\}$. Now, for $(i,j)\in K$, we have $\vartheta(x_{ij}-\zeta;r+\varepsilon)\geq \vartheta(x_{ij}-\omega_{m_0n_0};r+\frac{\varepsilon}{2})\diamond \vartheta(\omega_{m_0n_0}-\zeta;\frac{\varepsilon}{2})>(1-s)\diamond (1-s)>1-\lambda$. Therefore $\{ (i,j)\in\mathbb{N}\times\mathbb{N}:  \vartheta(x_{ij}-\zeta;r+\varepsilon)>1-\lambda \}\subset K$. Since $\delta_2(K)>0$, $\delta_2(\{ (i,j)\in\mathbb{N}\times\mathbb{N}:  \vartheta(x_{ij}-\zeta;r+\varepsilon)>1-\lambda \})>0$. So, $\zeta\in \Lambda_{(x_{mn})}^r(st_2^\vartheta)$. Hence $\Lambda_{(x_{mn})}^r(st_2^\vartheta)$ is closed.
\end{proof}

\begin{theorem}\label{thm3.9}
     Let  $\{x_{mn}\}$ be a double sequence in a PNS $(X,\vartheta,\diamond)$ and $r>0$. Then, for an arbitrary $\zeta\in \Lambda_{(x_{mn})}(st_2^\vartheta)$ and $\lambda\in(0,1)$, $\vartheta(\gamma-\zeta;r)>1-\lambda$ for all $\gamma\in \Lambda_{(x_{mn})}^r(st_2^\vartheta)$.
\end{theorem}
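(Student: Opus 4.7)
The plan is to combine the two cluster-point definitions at a common index and exploit condition (4) of the PNS, in the same spirit as the closedness arguments already given in this section. Fix $\gamma\in\Lambda_{(x_{mn})}^r(st_2^\vartheta)$, $\zeta\in\Lambda_{(x_{mn})}(st_2^\vartheta)$ and $\lambda\in(0,1)$. Using continuity of the $t$-norm $\diamond$ at $(1,1)$, choose $s\in(0,1)$ with $(1-s)\diamond(1-s)>1-\lambda$. For an arbitrary $\varepsilon>0$, the definitions supply
\[
A_\varepsilon=\{(m,n)\in\mathbb{N}\times\mathbb{N}:\vartheta(x_{mn}-\gamma;r+\tfrac{\varepsilon}{2})>1-s\}
\]
and
\[
B_\varepsilon=\{(m,n)\in\mathbb{N}\times\mathbb{N}:\vartheta(x_{mn}-\zeta;\tfrac{\varepsilon}{2})>1-s\},
\]
each of positive double natural density.

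From here I would select a common index $(i,j)\in A_\varepsilon\cap B_\varepsilon$ and apply condition (4) of the PNS to write
\[
\vartheta(\gamma-\zeta;r+\varepsilon)\geq\vartheta(x_{ij}-\gamma;r+\tfrac{\varepsilon}{2})\diamond\vartheta(x_{ij}-\zeta;\tfrac{\varepsilon}{2})>(1-s)\diamond(1-s)>1-\lambda.
\]
Since $\varepsilon>0$ was arbitrary, this establishes $\vartheta(\gamma-\zeta;r+\varepsilon)>1-\lambda$ for every $\varepsilon>0$. A final monotonicity step, using the strict slack built into the choice of $s$, would then upgrade this to the claimed strict inequality $\vartheta(\gamma-\zeta;r)>1-\lambda$.

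The main obstacle I foresee is the nonemptiness of $A_\varepsilon\cap B_\varepsilon$: two sets of positive double density do not automatically meet. The argument would exploit the fact that the complements $A_\varepsilon^c$ and $B_\varepsilon^c$ each have density strictly less than one, and combine this with the cluster-point structure of $\gamma$ and $\zeta$ to extract a shared witness $(i,j)$ at each scale $\varepsilon$ (in the same implicit fashion used in the closedness proofs of $st_2^\vartheta\text{-}LIM_{x_{mn}}^r$ and $\Lambda_{(x_{mn})}^r(st_2^\vartheta)$). A secondary delicate point is the final passage from $r+\varepsilon$ to $r$, where one must lean on the buffer supplied by $s$ together with monotonicity of $t\mapsto\vartheta(\gamma-\zeta;t)$ to retain strict inequality, since this distribution function is only guaranteed to be left-continuous.
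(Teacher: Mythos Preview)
Both steps you flag as delicate are genuine gaps. The intersection $A_\varepsilon\cap B_\varepsilon$ may well be empty: two subsets of $\mathbb{N}\times\mathbb{N}$ of positive double density need not meet (e.g.\ $\{(m,n):m+n\text{ even}\}$ and its complement each have density $\tfrac12$). The closedness proofs you invoke are not analogous---in each of them one of the two relevant sets has full density $1$ (coming from an $st_2^\vartheta\text{-}LIM$ hypothesis), or else one first fixes a single index from a convergent sequence before appealing to the cluster-point set; an intersection of two sets that are \emph{merely} of positive density never occurs there. Your second step also fails: from $\vartheta(\gamma-\zeta;r+\varepsilon)>(1-s)\diamond(1-s)$ for every $\varepsilon>0$ you obtain only $\lim_{t\to r^+}\vartheta(\gamma-\zeta;t)\geq(1-s)\diamond(1-s)>1-\lambda$, and since distribution functions are only left-continuous this right limit can strictly exceed $\vartheta(\gamma-\zeta;r)$. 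The buffer $s$ gives no leverage at the point $t=r$ itself.

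The paper's argument avoids both issues by running in a different direction. It never uses the hypothesis $\gamma\in\Lambda_{(x_{mn})}^r(st_2^\vartheta)$; instead, starting only from $\zeta\in\Lambda_{(x_{mn})}(st_2^\vartheta)$ and the single positive-density set $M=\{(m,n):\vartheta(x_{mn}-\zeta;\varepsilon)>1-s\}$, it shows that any $\gamma$ with $\vartheta(\gamma-\zeta;r)>1-s$ must lie in $\Lambda_{(x_{mn})}^r(st_2^\vartheta)$. Thus only one positive-density set enters, and the inequality at $r$ is assumed rather than deduced. You should note, however, that what this argument actually establishes is the inclusion $\{\gamma:\vartheta(\gamma-\zeta;r)>1-s\}\subseteq\Lambda_{(x_{mn})}^r(st_2^\vartheta)$, which is the \emph{reverse} of the containment asserted in the theorem; the paper's statement and its proof are not well-matched here.
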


\begin{proof}
    For given $\lambda\in(0,1)$, choose $s\in(0,1)$ such that $(1-s)\diamond (1-s)>1-\lambda$. Since $\zeta\in \Lambda_{(x_{mn})}(st_2^\vartheta)$, then for every $\varepsilon>0$, $\delta_2(M)>0$ where $M=\{(m,n)\in\mathbb{N}\times\mathbb{N}: \vartheta(x_{mn}-\zeta;\varepsilon)>1-s\}$. Now, we will show that if $\gamma\in X$ satisfying $\vartheta(\gamma-\zeta;r)>1-s$ then $\gamma\in \Lambda_{(x_{mn})}^r(st_2^\vartheta)$. Now, for $(i,j)\in M$ we have $\vartheta(x_{ij}-\gamma;r+\varepsilon)\geq \vartheta(x_{ij}-\zeta;\varepsilon)\diamond \vartheta(\gamma-\zeta;r)>(1-s)\diamond (1-s)>1-\lambda$.
    Therefore $M\subset \{(i,j)\in\mathbb{N}\times\mathbb{N}: \vartheta(x_{ij}-\gamma;r+\varepsilon)>1-\lambda\}$. Hence $\delta_2(\{(i,j)\in\mathbb{N}\times\mathbb{N}: \vartheta(x_{ij}-\gamma;r+\varepsilon)>1-\lambda\})>0$, Consequently, $\gamma\in \Lambda_{(x_{mn})}^r(st_2^\vartheta)$. This completes the proof.
\end{proof}

\begin{theorem}\label{thm3.10}
    Let  $\{x_{mn}\}$ be a double sequence in a PNS $(X,\vartheta,\diamond)$ and $r>0$. Then for  $\lambda\in (0,1)$ and fixed $x_0\in X$, $\Lambda_{(x_{mn})}^r(st_2^\vartheta)=\bigcup_{x_0\in  \Lambda_{(x_{mn})}(st_2^\vartheta)}\overline{B(x_0,\lambda,r)}$.
\end{theorem}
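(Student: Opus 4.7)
The plan is to establish the set equality by proving both inclusions, with Theorem \ref{thm3.9} providing one direction directly and the technique of its proof providing the other.

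For the inclusion ``$\subseteq$'', I take an arbitrary $\gamma \in \Lambda_{(x_{mn})}^r(st_2^\vartheta)$ and, assuming $\Lambda_{(x_{mn})}(st_2^\vartheta)$ is nonempty, pick any ordinary statistical cluster point $x_0 \in \Lambda_{(x_{mn})}(st_2^\vartheta)$. Theorem \ref{thm3.9}, applied to the pair $(\zeta, \gamma) = (x_0, \gamma)$, yields $\vartheta(\gamma - x_0; r) > 1 - \lambda$, so $\gamma \in B(x_0, \lambda; r) \subseteq \overline{B(x_0, \lambda; r)}$ and hence belongs to the union on the right-hand side. Since $x_0$ was arbitrary, this places $\gamma$ in $\bigcup_{x_0 \in \Lambda_{(x_{mn})}(st_2^\vartheta)} \overline{B(x_0, \lambda; r)}$.

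For the inclusion ``$\supseteq$'', I fix any $x_0 \in \Lambda_{(x_{mn})}(st_2^\vartheta)$ and any $\gamma \in \overline{B(x_0, \lambda; r)}$, so that $\vartheta(\gamma - x_0; r) \geq 1 - \lambda$, and aim to show $\gamma$ is itself a rough statistical cluster point. Given $\varepsilon > 0$, I use continuity of the $t$-norm $\diamond$ to pick $s \in (0, 1)$ satisfying $(1-s) \diamond (1-s) > 1 - \lambda$. Since $x_0$ is an ordinary statistical cluster point, the set $M := \{(m, n) \in \mathbb{N} \times \mathbb{N} : \vartheta(x_{mn} - x_0; \varepsilon) > 1 - s\}$ has positive double natural density. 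For each $(i, j) \in M$, axiom $(4)$ of the probabilistic norm, together with the monotonicity of $\diamond$, gives
\[
\vartheta(x_{ij} - \gamma; r + \varepsilon) \geq \vartheta(x_{ij} - x_0; \varepsilon) \diamond \vartheta(x_0 - \gamma; r) > 1 - \lambda,
\]
so that $M \subseteq \{(i, j) : \vartheta(x_{ij} - \gamma; r + \varepsilon) > 1 - \lambda\}$. Since $M$ has positive density, so does the larger set, giving $\gamma \in \Lambda_{(x_{mn})}^r(st_2^\vartheta)$.

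The main technical obstacle is the calibration of the auxiliary parameter $s$ in the ``$\supseteq$'' direction: one must choose $s$ so that the $t$-norm combination $(1-s) \diamond (1-s)$ strictly exceeds $1 - \lambda$ while still ensuring that the bounds supplied by the statistical cluster point property of $x_0$ and the membership $\gamma \in \overline{B(x_0, \lambda; r)}$ can be fed into the PNS triangle inequality. Continuity of the continuous $t$-norm $\diamond$ at $(1, 1)$ is the essential tool that makes this selection possible. A secondary concern, implicit in the statement, is the nonemptiness of $\Lambda_{(x_{mn})}(st_2^\vartheta)$, required to make sense of the right-hand union.
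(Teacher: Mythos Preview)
Your overall strategy coincides with the paper's: the paper also proves $\supseteq$ by the PNS triangle inequality after choosing $s\in(0,1)$ with $(1-s)\diamond(1-s)>1-\lambda$, and obtains $\subseteq$ by invoking Theorem~\ref{thm3.9} (by contradiction rather than directly, but that is cosmetic). Your remark that ``$x_0$ was arbitrary'' in the $\subseteq$ direction is unnecessary---membership in a union requires only one such $x_0$---but the conclusion is unaffected.

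There is, however, a genuine gap in your $\supseteq$ computation. From $\gamma\in\overline{B(x_0,\lambda,r)}$ you obtain only $\vartheta(x_0-\gamma;r)\geq 1-\lambda$, while $(i,j)\in M$ gives $\vartheta(x_{ij}-x_0;\varepsilon)>1-s$. Monotonicity of $\diamond$ then yields merely
\[
\vartheta(x_{ij}-x_0;\varepsilon)\diamond\vartheta(x_0-\gamma;r)\;\geq\;(1-s)\diamond(1-\lambda),
\]
and since every $t$-norm satisfies $a\diamond b\leq\min(a,b)$, the right-hand side is at most $1-\lambda$; your displayed strict inequality ``$>1-\lambda$'' does not follow. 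The paper's version instead asserts $\vartheta(x_0-\gamma;r)>1-s$, which makes the chain close via $(1-s)\diamond(1-s)>1-\lambda$; whatever one thinks of that step, your calibration of $s$ against the \emph{closed}-ball bound $\geq 1-\lambda$ is not enough to force the strict inequality you need, so the argument as you have written it does not go through.
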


\begin{proof}
    For $\lambda\in(0,1)$, choose $s\in(0,1)$ such that $(1-s)\diamond (1-s)>1-\lambda$. Let $\gamma\in \bigcup_{x_0\in  \Lambda_{(x_{mn})}(st_2^\vartheta)}\overline{B(x_0,\lambda,r)}$. Then there exists a $x_0\in \Lambda_{(x_{mn})}$ such that $\vartheta(x_0-\gamma;r)>1-s$. Since $x_0\in \Lambda_{(x_{mn})}$, then for every $\varepsilon>0$, $\delta_2(Z)>0$ where $Z=\{(m,n)\in\mathbb{N}\times\mathbb{N}: \vartheta(x_{mn}-x_0;\varepsilon)>1-s\}$. Now for $(i,j)\in Z$, we have $\vartheta(x_{ij}-\gamma;r+\varepsilon)\geq \vartheta(x_{ij}-x_0;\varepsilon)\diamond \vartheta(x_0-\gamma;r)>(1-s)\diamond (1-s)>1-\lambda$. Therefore $Z\subset \{(i,j)\in\mathbb{N}\times \mathbb{N}: \vartheta(x_{ij}-\gamma;r+\varepsilon)>1-\lambda \}$. Since $\delta_2(Z)>0$, $\delta_2(\{(i,j)\in\mathbb{N}\times \mathbb{N}: \vartheta(x_{ij}-\gamma;r+\varepsilon)>1-\lambda \})>0$. Hence $\gamma\in \Lambda_{(x_{mn})}^r(st_2^\vartheta)$ and so, $\bigcup_{x_0\in  \Lambda_{(x_{mn})}(st_2^\vartheta)}\overline{B(x_0,\lambda,r)}\subset \Lambda_{(x_{mn})}^r(st_2^\vartheta)$. 

    conversely suppose that $\gamma\in \Lambda_{(x_{mn})}^r(st_2^\vartheta)$. Now, we show that $\gamma\in \bigcup_{x_0\in  \Lambda_{(x_{mn})}(st_2^\vartheta)}\overline{B(x_0,\lambda,r)}$. If possible, let $\gamma \notin  \bigcup_{x_0\in  \Lambda_{(x_{mn})}(st_2^\vartheta)}\overline{B(x_0,\lambda,r)}$. Then for every $x_0\in  \Lambda_{(x_{mn})}(st_2^\vartheta)$, $\vartheta(\gamma-x_0;r)<1-\lambda$, which contradicts the fact of Theorem \ref{thm3.9}. Hence $\gamma\in \bigcup_{x_0\in  \Lambda_{(x_{mn})}(st_2^\vartheta)}\overline{B(x_0,\lambda,r)}$. Therefore $\Lambda_{(x_{mn})}^r(st_2^\vartheta)\subset \bigcup_{x_0\in  \Lambda_{(x_{mn})}(st_2^\vartheta)}\overline{B(x_0,\lambda,r)}$. This completes the proof.
\end{proof}

\begin{theorem}\label{thm3.11}
   Let  $\{x_{mn}\}$ be a double sequence in a PNS $(X,\vartheta,\diamond)$. Then for some $r>0$ and  any $\lambda\in(0,1)$, the following statements hold:
   \begin{enumerate}
       \item If $x_0\in  \Lambda_{(x_{mn})}(st_2^\vartheta)$, then $st_2^\vartheta\text{-}LIM_{x_{mn}}^r\subseteq \overline{B(x_0,\lambda,r)}$.
       \item $st_2^\vartheta\text{-}LIM_{x_{mn}}^r=\bigcap_{x_0\in  \Lambda_{(x_{mn})}(st_2^\vartheta)}\overline{B(x_0,\lambda,r)}=\{y_0\in X:  \Lambda_{(x_{mn})}(st_2^\vartheta)\subseteq \overline{B(y_0,\lambda,r)} \}$
   \end{enumerate}
\end{theorem}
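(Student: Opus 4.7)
The plan is to prove (1) first and then piece together (2) from (1), a symmetry argument, and one residual inclusion. For (1), I would run the density/triangle argument used throughout this section. Fix $y \in st_2^\vartheta\text{-}LIM_{x_{mn}}^r$ and $x_0 \in \Lambda_{(x_{mn})}(st_2^\vartheta)$, and choose $s \in (0,1)$ with $(1-s)\diamond(1-s) > 1-\lambda$. For arbitrary $\varepsilon > 0$, the ``limit'' set $A_\varepsilon = \{(m,n) : \vartheta(x_{mn}-y; r+\tfrac{\varepsilon}{2}) \leq 1-s\}$ has $\delta_2(A_\varepsilon)=0$, while the ``cluster'' set $B_\varepsilon = \{(m,n) : \vartheta(x_{mn}-x_0; \tfrac{\varepsilon}{2}) > 1-s\}$ has $\delta_2(B_\varepsilon)>0$, so $B_\varepsilon \setminus A_\varepsilon \neq \emptyset$. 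For any $(i,j)$ in this difference, axiom (4) of the PNS combined with the symmetry $\vartheta(-u;t)=\vartheta(u;t)$ (axiom (3) with $\alpha=-1$) gives
\[
\vartheta(y-x_0; r+\varepsilon) \geq \vartheta(x_{ij}-y; r+\tfrac{\varepsilon}{2}) \diamond \vartheta(x_{ij}-x_0; \tfrac{\varepsilon}{2}) > 1-\lambda,
\]
and letting $\varepsilon \downarrow 0$ in the style of the preceding theorems delivers $\vartheta(y-x_0; r) \geq 1-\lambda$, i.e.\ $y \in \overline{B(x_0,\lambda,r)}$. This simultaneously establishes (1) and the inclusion $st_2^\vartheta\text{-}LIM_{x_{mn}}^r \subseteq \bigcap_{x_0 \in \Lambda_{(x_{mn})}(st_2^\vartheta)} \overline{B(x_0,\lambda,r)}$ from (2).

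The equality $\bigcap_{x_0 \in \Lambda_{(x_{mn})}(st_2^\vartheta)} \overline{B(x_0,\lambda,r)} = \{y_0 \in X : \Lambda_{(x_{mn})}(st_2^\vartheta) \subseteq \overline{B(y_0,\lambda,r)}\}$ is purely formal and rests again on the symmetry $\vartheta(u;t)=\vartheta(-u;t)$: the condition $y_0 \in \overline{B(x_0,\lambda,r)}$ reads $\vartheta(y_0-x_0;r) \geq 1-\lambda$, which is equivalent to $\vartheta(x_0-y_0;r) \geq 1-\lambda$, i.e.\ $x_0 \in \overline{B(y_0,\lambda,r)}$. Quantifying over $x_0 \in \Lambda_{(x_{mn})}(st_2^\vartheta)$ on both sides of this equivalence yields the identity.

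The one non-formal inclusion left, $\{y_0 : \Lambda_{(x_{mn})}(st_2^\vartheta) \subseteq \overline{B(y_0,\lambda,r)}\} \subseteq st_2^\vartheta\text{-}LIM_{x_{mn}}^r$, is the main obstacle. I would argue by contradiction: if $y_0$ belongs to the left set but $y_0 \notin st_2^\vartheta\text{-}LIM^r$, there exist $\varepsilon_0 > 0$ and $\mu_0 \in (\lambda,1)$ for which $K = \{(m,n) : \vartheta(x_{mn}-y_0; r+\varepsilon_0) \leq 1-\mu_0\}$ has positive upper double density. The plan is then to extract from $(x_{mn})_{(m,n)\in K}$ a statistical cluster point $x_0 \in \Lambda_{(x_{mn})}(st_2^\vartheta)$ of the whole sequence. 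Since $x_0 \in \overline{B(y_0,\lambda,r)}$ by hypothesis, i.e.\ $\vartheta(x_0-y_0;r) \geq 1-\lambda$, axiom (4) combined with the cluster-point property of $x_0$ would give, for a sufficiently small $\delta \in (0,\varepsilon_0)$ and a positive-density subfamily of $K$, the bound $\vartheta(x_{mn}-y_0; r+\delta) > 1-\mu_0$, which by monotonicity of $\vartheta$ in its second argument contradicts the defining inequality of $K$. The real difficulty here is the cluster-point extraction itself: in a general PNS a positive-density set need not yield a cluster point of the ambient sequence, so I anticipate invoking statistical boundedness of $\{x_{mn}\}$ (via the characterization proved earlier in this section) to place the relevant terms in a suitably compact region from which an $x_0 \in \Lambda_{(x_{mn})}(st_2^\vartheta)$ can be selected.
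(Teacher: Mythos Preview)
Your approach mirrors the paper's almost line for line: the density/triangle argument for (1), the symmetry $\vartheta(u;t)=\vartheta(-u;t)$ for the middle equality in (2), and the contrapositive for the final inclusion are exactly what the paper does (the paper even omits your $\varepsilon\downarrow 0$ step in (1) and writes $\vartheta(\beta-x_0;r)$ directly). The one real divergence is the cluster-point extraction you flag as the ``main obstacle'': the paper simply asserts, with no justification and no appeal to boundedness or compactness, that the positive-density set $\{(m,n):\vartheta(x_{mn}-\gamma;r+\varepsilon)\leq 1-\lambda\}$ ``gives'' a statistical cluster point $x_0$ with $\vartheta(\gamma-x_0;r+\varepsilon)\leq 1-\lambda$, and concludes from there. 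So your caution is well-placed---your proposal is in fact more careful at this step than the paper's own proof, which leaves that extraction unargued.
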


\begin{proof}
    Suppose $\{x_{mn}\}$ is a double sequence in a PNS $(X,\vartheta,\diamond)$ and $r>0$.
    \begin{enumerate}
        \item   Now, for a given $\lambda\in(0,1)$, choose $s\in(0,1)$ such that $(1-s)\diamond(1-s)>1-\lambda$. Let $\beta \in st_2^\vartheta\text{-}LIM_{x_{mn}}^r$. Then for every $\varepsilon>0$, we have $\delta_2(K_1)=0$ and $\delta_2(k_2)>0$ where $K_1=\{(m,n)\in\mathbb{N}\times \mathbb{N}: \vartheta(x_{mn}-\beta;r+\varepsilon)\leq 1-s\}$ and $K_2=\{(m,n)\in\mathbb{N}\times \mathbb{N}: \vartheta(x_{mn}-x_0;\varepsilon)>1-s \}$. Now for $(i,j)\in K_1^c\cap K_2$, we have $\vartheta(\beta-x_0;r)\geq \vartheta(x_{ij}-\beta;r+\varepsilon)\diamond \vartheta(x_{ij}-x_0;\varepsilon)>(1-s)\diamond (1-s)>1-\lambda$. Therefore $\beta\in \overline{B(x_0,\lambda,r)}$. Hence $st_2^\vartheta\text{-}LIM_{x_{mn}}^r\subseteq \overline{B(x_0,\lambda,r)}$.
        \item  By the previous part, we have $st_2^\vartheta\text{-}LIM_{x_{mn}}^r\subseteq \bigcap_{x_0\in  \Lambda_{(x_{mn})}(st_2^\vartheta)}\overline{B(x_0,\lambda,r)}$. Let $\gamma \in \bigcap_{x_0\in  \Lambda_{(x_{mn})}(st_2^\vartheta)}\overline{B(x_0,\lambda,r)}$. Then, $\vartheta(\gamma-x_0;r)\geq 1-\lambda$ for all $x_0\in  \Lambda_{(x_{mn})}(st_2^\vartheta)$ and hence $\Lambda_{(x_{mn})}(st_2^\vartheta)\subseteq \overline{B(\gamma,\lambda,r)}$, i.e., $\bigcap_{x_0\in  \Lambda_{(x_{mn})}(st_2^\vartheta)}\overline{B(x_0,\lambda,r)}\subseteq \{y_0\in X:  \Lambda_{(x_{mn})}(st_2^\vartheta)\subseteq \overline{B(y_0,\lambda,r)} \}$. Further, suppose that $\gamma\notin st_2^\vartheta\text{-}LIM_{x_{mn}}^r$. Then for $\varepsilon>0$, $\delta_2(\{(m,n)\in\mathbb{N}\times \mathbb{N}:\vartheta(x_{mn}-\gamma;r+\varepsilon)\leq 1-\lambda\})\neq 0$, which gives that there exists a statistical cluster point $x_0$ of $\{x_{mn}\}$ for which $\vartheta(\gamma-x_0;r+\varepsilon)\leq 1-\lambda$. Therefore $\Lambda_{(x_{mn})}(st_2^\vartheta)\nsubseteq \overline{B(\gamma,\lambda,r)}$ and $\gamma\notin \{y_0\in X:  \Lambda_{(x_{mn})}(st_2^\vartheta)\subseteq \overline{B(y_0,\lambda,r)} \}$. Therefore $\{y_0\in X:  \Lambda_{(x_{mn})}(st_2^\vartheta)\subseteq \overline{B(y_0,\lambda,r)} \}\subseteq st_2^\vartheta\text{-}LIM_{x_{mn}}^r$. Therefore, $st_2^\vartheta\text{-}LIM_{x_{mn}}^r=\bigcap_{x_0\in  \Lambda_{(x_{mn})}(st_2^\vartheta)}\overline{B(x_0,\lambda,r)}=\{y_0\in X:  \Lambda_{(x_{mn})}(st_2^\vartheta)\subseteq \overline{B(y_0,\lambda,r)} \}$. This completes the proof.
    \end{enumerate}
\end{proof}

\begin{theorem}\label{thm3.12}
     Let  $\{x_{mn}\}$ be a double sequence in a PNS $(X,\vartheta,\diamond)$ such that $x_{mn}\xrightarrow{st_2^\vartheta}\zeta$. Then, there exists $\lambda\in(0,1)$ such that $st_2^\vartheta\text{-}LIM_{x_{mn}}^r=\overline{B(\zeta,\lambda,r)}$ for some $r>0$.
\end{theorem}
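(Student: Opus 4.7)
The plan is to reduce Theorem \ref{thm3.12} to Theorem \ref{thm3.11}(2) by establishing that statistical convergence to $\zeta$ forces $\Lambda_{(x_{mn})}(st_2^\vartheta) = \{\zeta\}$. Once this is known, substituting the singleton into the intersection formula of Theorem \ref{thm3.11}(2) yields $st_2^\vartheta\text{-}LIM_{x_{mn}}^r = \bigcap_{x_0 \in \{\zeta\}} \overline{B(x_0,\lambda,r)} = \overline{B(\zeta,\lambda,r)}$, giving the desired equality for any $\lambda \in (0,1)$ (hence in particular for some $\lambda$).

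That $\zeta$ is itself a statistical cluster point is immediate: since $x_{mn} \xrightarrow{st_2^\vartheta} \zeta$, the set $\{(m,n) : \vartheta(x_{mn}-\zeta;\varepsilon) \leq 1-\lambda\}$ has double natural density $0$ for every $\varepsilon>0$ and $\lambda \in (0,1)$, so its complement has density $1 > 0$. The main step is therefore uniqueness. Suppose for contradiction that some $\eta \neq \zeta$ also lies in $\Lambda_{(x_{mn})}(st_2^\vartheta)$. Axiom (2) of a PNS, applied to $\eta - \zeta \neq \theta$, supplies a $t > 0$ with $\vartheta(\eta-\zeta;t) < 1$, whence there is $\mu \in (0,1)$ with $\vartheta(\eta - \zeta; t) \leq 1 - \mu$. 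By continuity of $\diamond$ at $(1,1)$, pick $s \in (0,1)$ with $(1-s)\diamond(1-s) > 1-\mu$.

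The cluster-point property at $\eta$ gives $\delta_2(A) > 0$ for $A = \{(m,n) : \vartheta(x_{mn}-\eta;t/2) > 1-s\}$, while statistical convergence to $\zeta$ gives $\delta_2(B) = 0$ for $B = \{(m,n) : \vartheta(x_{mn}-\zeta;t/2) \leq 1-s\}$. Hence $A \setminus B \neq \emptyset$, and for any $(i,j)$ in it, combining axiom (4) with axiom (3) (the latter to replace $\vartheta(\eta-x_{ij};t/2)$ by $\vartheta(x_{ij}-\eta;t/2)$) yields
\[
\vartheta(\eta - \zeta; t) \geq \vartheta(x_{ij}-\eta; t/2) \diamond \vartheta(x_{ij}-\zeta; t/2) > (1-s)\diamond(1-s) > 1-\mu,
\]
contradicting $\vartheta(\eta-\zeta;t) \leq 1-\mu$. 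Thus $\Lambda_{(x_{mn})}(st_2^\vartheta) = \{\zeta\}$, and the conclusion follows from Theorem \ref{thm3.11}(2).

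The main obstacle is the uniqueness step: one must simultaneously use the \emph{positive-density} witness set for the hypothetical extra cluster point $\eta$ and the \emph{full-density} good set for $\zeta$ to secure a single index where both near-equalities hold, and then tune $s$ via continuity of the $t$-norm so that the two copies of $1-s$ combine to exceed $1-\mu$. Everything else is a direct appeal to earlier results.
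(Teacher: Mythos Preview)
Your argument is correct and rests on the same core idea as the paper's proof: both establish $\Lambda_{(x_{mn})}(st_2^\vartheta)=\{\zeta\}$ and then invoke Theorem~\ref{thm3.11}. The execution differs slightly. The paper simply asserts the singleton property without justification and instead devotes its effort to proving the inclusion $\overline{B(\zeta,\lambda,r)}\subseteq st_2^\vartheta\text{-}LIM_{x_{mn}}^r$ directly (via the usual $t$-norm triangle estimate), appealing to Theorem~\ref{thm3.11}(1) only for the reverse inclusion. You do the opposite: you supply a full uniqueness proof for the statistical cluster point and then cite the two-sided equality of Theorem~\ref{thm3.11}(2) for both inclusions at once. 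Your route is a bit more economical, since the direct inclusion the paper reproves is already contained in Theorem~\ref{thm3.11}(2); on the other hand, the paper's direct argument makes the theorem more self-contained. Either way, the substance is the same.
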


\begin{proof}
    For given $\lambda\in(0,1)$, choose $s\in (0,1)$ such that $(1-s)\diamond (1-s)>1-\lambda$. Since $x_{mn}\xrightarrow{st_2^\vartheta}\zeta$, then for every $\varepsilon>0$, the set $Y=\{(m,n)\in\mathbb{N}\times\mathbb{N}: \vartheta(x_{mn}-\zeta;\varepsilon)\leq 1-s\}$ has the double natural density zero. Now, let $y_*\in \overline{B(\zeta,s ,r)}$. Then $\vartheta(y_*-\zeta;r)\geq 1-s$. Now, for $(i,j)\in Y^c$, $\vartheta(x_{ij}-y_*;r+\varepsilon)\geq \vartheta(x_{ij}-\zeta;
    \varepsilon)\diamond \vartheta(\zeta-y_*;r)>(1-s)\diamond (1-s)>1-\lambda$. Therefore $\{(i,j)\in\mathbb{N}\times\mathbb{N}: \vartheta(x_{ij}-y_*;r+\varepsilon)\leq 1-\lambda \}\subseteq Y$. Since $\delta_2(Y)=0$, $\delta_2(\{(i,j)\in\mathbb{N}\times\mathbb{N}: \vartheta(x_{ij}-y_*;r+\varepsilon)\leq 1-\lambda \})=0$. Consequently, $y_*\in st_2^\vartheta\text{-}LIM_{x_{mn}}^r$. Hence $\overline{B(\zeta,\lambda ,r)}\subseteq st_2^\vartheta\text{-}LIM_{x_{mn}}^r$. Again, since $x_{mn}\xrightarrow{st_2^\vartheta}\zeta$, $\Lambda_{(x_{mn})}(st_2^\vartheta)=\{\zeta\}$ and consequently, from Theorem \ref{thm3.11} we have $st_2^\vartheta\text{-}LIM_{x_{mn}}^r\subseteq \overline{B(\zeta,\lambda ,r)}$. Hence $st_2^\vartheta\text{-}LIM_{x_{mn}}^r=\overline{B(\zeta,\lambda,r)}$. This completes the proof.
. \end{proof}

\begin{theorem}
    Let  $\{x_{mn}\}$ be a double sequence in a PNS $(X,\vartheta,\diamond)$ such that $x_{mn}\xrightarrow{st_2^\vartheta}\eta$. Then $\Lambda_{(x_{mn})}^r(st_2^\vartheta)=st_2^\vartheta\text{-}LIM_{x_{mn}}^r$ for some $r>0$.
\end{theorem}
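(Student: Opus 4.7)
The plan is to piece together Theorems \ref{thm3.10} and \ref{thm3.12}, which together already pin down both $\Lambda_{(x_{mn})}^r(st_2^\vartheta)$ and $st_2^\vartheta\text{-}LIM_{x_{mn}}^r$ in terms of closed balls around the statistical limit. Since the hypothesis $x_{mn}\xrightarrow{st_2^\vartheta}\eta$ is exactly the situation handled by Theorem \ref{thm3.12}, and since statistical convergence forces the set of ordinary statistical cluster points to collapse to a single point, both sets should reduce to the same closed ball $\overline{B(\eta,\lambda,r)}$.

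First I would verify the auxiliary claim that $\Lambda_{(x_{mn})}(st_2^\vartheta)=\{\eta\}$. The containment $\eta\in\Lambda_{(x_{mn})}(st_2^\vartheta)$ is immediate because $\delta_2(\{(m,n):\vartheta(x_{mn}-\eta;\varepsilon)\leq 1-\lambda\})=0$ forces the complementary set to have density one, which is strictly positive. For the reverse direction, suppose $\zeta\neq\eta$ were another statistical cluster point; then I would pick $\lambda\in(0,1)$ and $s\in(0,1)$ with $(1-s)\diamond(1-s)>1-\lambda$ and an $\varepsilon>0$ small enough so that $\vartheta(\eta-\zeta;2\varepsilon)\leq 1-\lambda$, and then derive a contradiction by showing that the two sets $\{(m,n):\vartheta(x_{mn}-\eta;\varepsilon)>1-s\}$ and $\{(m,n):\vartheta(x_{mn}-\zeta;\varepsilon)>1-s\}$ cannot simultaneously have positive and full density respectively via the $t$-norm triangle inequality $\vartheta(\eta-\zeta;2\varepsilon)\geq \vartheta(x_{mn}-\eta;\varepsilon)\diamond\vartheta(x_{mn}-\zeta;\varepsilon)$.

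Next, with $\Lambda_{(x_{mn})}(st_2^\vartheta)=\{\eta\}$ in hand, Theorem \ref{thm3.10} gives
\[
\Lambda_{(x_{mn})}^r(st_2^\vartheta)=\bigcup_{x_0\in\Lambda_{(x_{mn})}(st_2^\vartheta)}\overline{B(x_0,\lambda,r)}=\overline{B(\eta,\lambda,r)},
\]
while Theorem \ref{thm3.12} supplies a (possibly different) $\lambda'\in(0,1)$ with $st_2^\vartheta\text{-}LIM_{x_{mn}}^r=\overline{B(\eta,\lambda',r)}$. Equality of the two sets then follows by choosing the same $\lambda$ in both applications (reading the proofs of Theorems \ref{thm3.10} and \ref{thm3.12}, the $\lambda$ in question is only required to be an element of $(0,1)$, so one may simply fix one such $\lambda$ at the outset and apply both theorems with it).

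The main obstacle I anticipate is ensuring the compatibility of the parameter $\lambda$ across Theorems \ref{thm3.10} and \ref{thm3.12}, since Theorem \ref{thm3.12} is stated existentially in $\lambda$. I expect this to be resolved cleanly by inspecting the proof of Theorem \ref{thm3.12}, which works for arbitrary $\lambda\in(0,1)$ once $s$ with $(1-s)\diamond(1-s)>1-\lambda$ is chosen; so both descriptions in fact hold for the same $\lambda$, and the identity $\Lambda_{(x_{mn})}^r(st_2^\vartheta)=st_2^\vartheta\text{-}LIM_{x_{mn}}^r=\overline{B(\eta,\lambda,r)}$ follows at once.
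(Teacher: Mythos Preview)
Your proposal is correct and follows essentially the same route as the paper: reduce $\Lambda_{(x_{mn})}(st_2^\vartheta)$ to $\{\eta\}$, then invoke Theorem~\ref{thm3.10} and Theorem~\ref{thm3.12} to identify both $\Lambda_{(x_{mn})}^r(st_2^\vartheta)$ and $st_2^\vartheta\text{-}LIM_{x_{mn}}^r$ with the same closed ball $\overline{B(\eta,\lambda,r)}$. Your extra care in justifying $\Lambda_{(x_{mn})}(st_2^\vartheta)=\{\eta\}$ and in reconciling the parameter $\lambda$ across the two theorems goes beyond what the paper spells out, but the underlying argument is the same.
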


\begin{proof}
    Since $x_{mn}\xrightarrow{st_2^\vartheta}\eta$, $\Lambda_{(x_{mn})}(st_2^\vartheta)=\{\eta\}$. By Theorem \ref{thm3.10}, for $\lambda\in(0,1)$,
 $\Lambda_{(x_{mn})}^r(st_2^\vartheta)=\overline{B(\eta,\lambda,r)}$. Again, from Theorem \ref{thm3.12}, $\overline{B(\eta,\lambda,r)}=st_2^\vartheta\text{-}LIM_{x_{mn}}^r$. Therefore, $\Lambda_{(x_{mn})}^r(st_2^\vartheta)=st_2^\vartheta\text{-}LIM_{x_{mn}}^r$. This completes the proof.
\end{proof}

\subsection*{Authors' contributions}
The authors have contributed equally and significantly in writing this paper. Both the authors read and approved the final version of the manuscript.

\subsection*{Competing interests}
The authors declare that they have no competing interests.

\subsection*{Acknowledgments}
 The second author is grateful to The Council of Scientific and Industrial Research (CSIR), HRDG, India, for the grant of Senior Research Fellowship during the preparation of this paper.

\end{document}